\newtheorem{theorem}{Theorem}[section]
\newtheorem{proposition}[theorem]{Proposition}
\newtheorem{corollary}[theorem]{Corollary}
\newtheorem{lemma}[theorem]{Lemma}
\theoremstyle{definition}
\newtheorem{example}[theorem]{Example}
\newtheorem{remark}[theorem]{Remark}
\newcommand{\CC}{{\mathbb C}}
\newcommand{\NN}{{\mathbb N}}
\newcommand{\ZZ}{{\mathbb Z}}
\newcommand{\DD}{{\mathbb D}}
\newcommand{\RR}{{\mathbb R}}
\newcommand{\cB}{{\mathcal B}}
\newcommand{\cC}{{\mathcal C}}
\newcommand{\cD}{{\mathcal D}}
\newcommand{\cG}{{\mathcal G}}
\newcommand{\cH}{{\mathcal H}}
\newcommand{\cP}{{\mathcal P}}
\newcommand{\cR}{{\mathcal R}}
\newcommand{\cS}{{\mathcal S}}
\newcommand{\cW}{{\mathcal W}}
\newcommand{\fA}{{\mathfrak A}}
\newcommand{\dom}{\operatorname{Dom}}
\newcommand{\Ra}{\Rightarrow}
\newcommand{\ran}{\operatorname{Ran}}
\newcommand{\ra}{\rightarrow}
\let\phi=\varphi
\renewcommand{\ker}{\operatorname{Null}}
\newcommand{\de}{\operatorname{d}}
\newcommand{\nr}[1]{\vspace{0.1ex}\noindent\hspace*{12mm}\llap{\textup{(#1)}}}
\title[Two Generalisations Triplets of Hilbert Spaces]{A Comparison of Two Generalisations of \\
Triplets of Hilbert Spaces}
\thanks{The second named
author acknowledges financial support from
the grant PN-III-P4-PCE-2016-0823 Dynamics and Differentiable Ergodic Theory from UEFISCDI, Romania.}
\author{Petru Cojuhari}
\address{Faculty of Applied Mathematics, AGH University of Science and
Technology,\hfill\break Al.~Mickievicza 30, 30-059 Krak\'ow, Poland}
\email{cojuhari@agh.edu.pl}
\author{Aurelian Gheondea}
\address{Department of Mathematics, Bilkent University, 06800 Bilkent, Ankara,
  Turkey, \emph{and} Institutul de Matematic\u a al Academiei Rom\^ane, C.P.\
  1-764, 014700 Bucure\c sti, Rom\^ania}
\email{aurelian@fen.bilkent.edu.tr \textrm{and} A.Gheondea@imar.ro}
\begin{document}
\date{\today}

\begin{abstract} We compare the concept of triplet of closely embedded Hilbert spaces with that
of generalised triplet of Hilbert spaces in the sense of Berezanskii by showing when they coincide,
when they are different, and when starting from one of them one can naturally produce the other one that
essentially or fully coincides.
 \end{abstract}

\subjclass[2010]{47A70, 47B25, 47B34, 46E22, 46E35, 35H99, 35D30}
\keywords{Generalised triplet of Hilbert spaces, 
closed embedding, triplet of closely embedded Hilbert spaces, rigged Hilbert spaces, kernel 
operator, Hamiltonian}
\maketitle

\section{Introduction}

There are two basic paradigms of mathematical models in quantum physics: one due to J.~von Neumann
based on Hilbert spaces and their linear operators and the other due to P.A.M.~Dirac based on
the bra-ket duality. The two paradigms have been connected by  L.~Schwartz's theory of 
distributions \cite{Schwartz} 
and the rigged Hilbert space method, originated by I.M.~Gelfand and his school
\cite{GelfandVilenkin}, with the remarkable success of Gelfand-Maurin's Theorem 
\cite{GelfandVilenkin} and \cite{Maurin}, that turned out to be a powerful tool in analysis, partial 
differential equations, and mathematical physics.

An important more rigorous formalisation of the construction of rigged Hilbert spaces was done by
Yu.M. Berzanskii \cite{Berezanski}  
through a scale of Hilbert spaces 
and where the main step is taken by the so-called \emph{triplet of Hilbert spaces}.
More precisely,  a triplet of Hilbert spaces, denoted $\cH_{+}
\hookrightarrow \cH\hookrightarrow \cH_{-}$, means that: $\cH_{+}$, $\cH$, 
and $\cH_-$ are 
Hilbert spaces, the embeddings are continuous (bounded linear operators), the space 
$\cH_+$ is dense in $\cH$, the space $\cH$ is dense in $\cH_-$, and the space $\cH_-
$ is the dual of $\cH_+$ with respect to $\cH$, that is, $\|\phi\|_-=\sup\{ |\langle h,\phi
\rangle_{\cH}\mid \|h\|_+\leq 1\}$, for all $\phi\in \cH$. Extending these triplets on both 
sides, one may get a scale of Hilbert spaces
that yields, by an inductive and, respectively, projective limit method, a 
\emph{rigged Hilbert space} $\cS\hookrightarrow \cH\hookrightarrow \cS'$. 

In \cite{CojGh5}
we replaced the continuous embeddings by closed embeddings, that have been defined and studied 
in \cite{CojGh3}, in order 
to obtain a more flexible notion of triplet of Hilbert spaces, in the sense that we obtained
a chain of two closed embeddings with certain duality properties 
that is associated to a given Hamiltonian,
for which $0$ is yet not an eigenvalue but which may not be in the 
resolvent set. There, we obtained a model and an axiomatisation of the notion of 
triplets of closely embedded Hilbert spaces, a short study of the basic 
existence, uniqueness, a few other properties of them, and  many applications, among which a prominent role 
was played by an application to a Dirichlet problem associated to a general class of "elliptic like" partial 
differential operators. 

However, a different type of generalised triplet of Hilbert spaces was already known to Yu.M.~Berazanskii, 
see \cite{Berezanskii}, and it is the aim of this article to make a comparison of the two 
concepts. In this respect, the two concepts share some common traits, one of the most interesting being
the symmetry, see Proposition~5.3 in \cite{CojGh5} and Corollary~\ref{c:symgth}. Also, in 
Example~\ref{ex:wledoi} and Example~\ref{ex:dirichlet} we show that for the case of
two of the toy models, weighted $L^2$ spaces and Dirichlet type spaces on the unit polydisc, 
that we have used in order to derive the axiomatisation of triplets of closely embedded Hilbert spaces, these
two concepts coincide.

On the other hand, the two generalisations of triplets of Hilbert spaces are rather different in nature, 
when considered in the abstract sense. Thus, we first show in Theorem~\ref{t:genclo} that, given
a triplet of closely embedded Hilbert spaces $(\cH_+;\cH_0;\cH_-)$, the space 
$\cD=\cH_+\cap\cH_0\cap\cH_-$ is dense in each of $\cH_+$, $\cH_0$ and $\cH_-$ and that the triplet
$(\cH_+;\cH_0;\cH_-)$ becomes a generalised triplet in the sense of Berezanskii if and only if a certain 
continuity property, that can be equivalently formulated in two other different ways, holds.

For the converse problem, we first recall, in Lemma~\ref{l:be}, 
the reformulation of the original definition of Berezanskii 
of a generalised  triplet of Hilbert spaces $(\cH;\cH_0;\cH^\prime)$
in operator theoretical terms, more precisely, in terms of a contractive linear operator 
$B\colon \cH^\prime\ra\cH$ that is boundedly invertible.
Then we show in Theorem~\ref{t:gth}
that, given a generalised triplet of Hilbert spaces $(\cH;\cH_0;\cH^\prime)$, we can construct in a natural way
a triplet of closely embedded Hilbert spaces $(\cR(T);\cH_0;\cD(T^*))$
that "essentially" coincides with the triplet of generalised Hilbert spaces 
$(\cH;\cH_0;\cH^\prime)$ on the linear manifold $\cD=\cH\cap\cH_0\cap\cH^\prime$, 
that is dense in each of the spaces $\cH$, $\cH_0$, and $\cH^\prime$, 
modulo a norm equivalent with $\|\cdot\|_{\cH^\prime}$.
Finally, in Theorem~\ref{t:gtceh} characterisations of those generalised triplets of Hilbert spaces that are also 
triplets of closely embedded Hilbert spaces are obtained.

For the reader's convenience and due to the fact that the concept of triplets of closely embedded Hilbert
spaces is rather involved, we reviewed in Section~\ref{s:tcehs} all necessary constructions and the main 
results on this issue, that are needed in this article.

\section{Triplets of Closely Embedded Hilbert Spaces}
\label{s:tcehs}

A Hilbert space $\cH_+$
is called {\em closely embedded} in the Hilbert space $\cH$ if:
\begin{itemize}
\item[(ceh1)] There exists a linear manifold $\cD\subseteq
\cH_+\cap\cH$ that is dense in $\cH_+$.
\item[(ceh2)] The embedding operator $j_+$ with domain $\cD$ is closed,
as an operator $\cH_+\ra\cH$.
\end{itemize}

More precisely, axiom (ceh1) means that on $\cD$ the algebraic structures of 
$\cH_+$ and $\cH$ agree, while the meaning of the axiom (ceh2) is that the
embedding $j_+$, defined by $j_+x=x$ for all 
$x\in\cD\subseteq \cH_+$, is a closed operator when
considered as an operator from $\cH_+$ to $\cH$.
In case $\cH_+\subseteq\cH$ and the embedding operator 
$j_+\colon\cH_+\ra\cH$
is continuous, one says that $\cH_+$ is \emph{continuously embedded} in $\cH$.
The operator $A=j_+j_+^*$ is called the {\em
kernel operator} of the closely embedded Hilbert space $\cH_+$
with respect to $\cH$.

The abstract notion of closed embedding of Hilbert spaces was introduced
in \cite{CojGh3} following a generalised operator range model. In this section we first
recall two models, which are dual in a certain way, and that have been 
used in \cite{CojGh5}, and then we recall the concept of triplet 
of closely embedded Hilbert spaces and its main properties. 

\subsection{The Space $\cD(T)$.}\label{ss:dt}
We recall a first model of 
closely embedded Hilbert space generated by a closed densely defined operator. 
For the beginning, we consider a linear operator $T$ defined on a linear 
submanifold of $\cH$ and valued in $\cG$, for two Hilbert spaces $\cH$ and $\cG$,
and assume that its null space $\ker(T)$ is a closed subspace of $\cH$.
On the linear manifold $\dom(T)\ominus\ker(T)$ we consider 
the norm
\begin{equation}\label{e:normad} |x|_T:=\|Tx\|_\cG,\quad x\in\dom(T)
\ominus \ker(T),
\end{equation} and let $\cD(T)$ be the Hilbert space completion of the pre-Hilbert 
space $\dom(T)\ominus\ker(T)$ with respect to the norm $|\cdot|_T$  
associated to the inner product $(\cdot,\cdot)_T$
\begin{equation}\label{e:ipd} (x,y)_T=\langle Tx,Ty\rangle_\cG,\quad 
x,y\in \dom(T)\ominus \ker(T).
\end{equation}
We consider the operator $i_T$,
as an operator defined in $\cD(T)$ and valued in $\cH$, as follows
\begin{equation}\label{e:iote} i_T x:=x,\quad x\in\dom(i_T)=\dom(T)\ominus\ker(T).
\end{equation}
The operator $i_T$ is closed if and only if $T$ is a
closed operator, cf.\ Lemma~3.1 in \cite{CojGh5}. In addition, the construction of 
$\cD(T)$ is actually a renorming process, more precisely, the operator $T i_T$ 
admits a unique isometric extension $\widehat T\colon \cD(T)\ra \cG$, 
cf.\ Proposition~3.2 in \cite{CojGh5}. 

The most interesting case is when the operator $T$ is a closed and densely 
defined operator. Throughout this article, for two
$\cH$ and $\cG$ 
Hilbert spaces, we denote by $\cC(\cH,\cG)$ the class of all linear operators $T$ 
densely defined in $\cH$, with range in $\cG$, and closed.
The next proposition explores this case from the point of view of the
closed embedding of $\cD(T)$ in $\cH$ and that 
of the kernel operator $A=i_Ti_T^*$.

\begin{proposition}[\cite{CojGh5}, Proposition 3.3]\label{p:dete} Let $T\in\cC(\cH,\cG)$, 
for two Hilbert spaces $\cH$ and $\cG$.

\nr{a} $\cD(T)$ is closely embedded in $\cH$ and $i_T$ is the underlying closed
embedding.

\nr{b} $\ran(T^*)\subseteq \dom(i_T^*)$ and equality holds 
provided that $\ker(T)=0$.

\nr{c}  $\ran(T^*T)\subseteq \dom(i_Ti_T^*)$ and equality holds provided that 
$\ker(T)=0$. In addition, 
\begin{equation}\label{e:ititstar} (i_Ti_T^*)(T^*T)x=x,\mbox{ for all }x\in \dom(T^*T)\ominus \ker(T)
\end{equation}

\nr{d} $(i_Ti_T^*)\ran(T^*T)\subseteq\dom(T^*T)$ and equality holds provided that 
$\ker(T)=0$. In addition, 
\begin{equation}\label{e:tetestart} (T^*T)(i_Ti_T^*)u=u,\mbox{ for all }u\in 
\ran(T^*T).\end{equation}
\end{proposition}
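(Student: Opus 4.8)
The plan is to treat the four statements in order, using throughout the defining relation of the Hilbert adjoint together with the isometry $\widehat T\colon\cD(T)\to\cG$ recalled above, for which $(u,v)_T=\langle\widehat T u,\widehat T v\rangle_\cG$ and $\widehat T x=Tx$ for $x\in\dom(i_T)=\dom(T)\ominus\ker(T)$. Part~(a) is essentially immediate: the manifold $\dom(T)\ominus\ker(T)$ is by construction dense in $\cD(T)$ and is contained in $\cH$, which gives (ceh1), while the closedness of $i_T$ required by (ceh2) is exactly the content of the quoted equivalence ``$i_T$ closed $\Leftrightarrow$ $T$ closed'' applied to the hypothesis $T\in\cC(\cH,\cG)$.

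For part~(b) I would compute $i_T^*$ from its defining relation $\langle i_T x,y\rangle_\cH=(x,i_T^*y)_T$. Given $w=T^*g\in\ran(T^*)$, for every $x\in\dom(i_T)\subseteq\dom(T)$ one has $\langle i_T x,w\rangle_\cH=\langle x,T^*g\rangle_\cH=\langle Tx,g\rangle_\cG$, whence $|\langle i_T x,w\rangle_\cH|\le\|Tx\|_\cG\|g\|_\cG=|x|_T\|g\|_\cG$; thus this functional is $|\cdot|_T$-bounded and $w\in\dom(i_T^*)$, proving the inclusion. For the reverse inclusion when $\ker(T)=0$, I would start from $y\in\dom(i_T^*)$, so that $x\mapsto\langle x,y\rangle_\cH$ is bounded on $\dom(T)$ in the norm $\|Tx\|_\cG$; since $\ker(T)=0$ the assignment $Tx\mapsto\langle x,y\rangle_\cH$ is well defined and bounded on $\ran(T)$, so Riesz representation yields $g\in\cG$ with $\langle x,y\rangle_\cH=\langle Tx,g\rangle_\cG$ for all $x\in\dom(T)$, which is precisely the statement $g\in\dom(T^*)$ and $T^*g=y$, i.e.\ $y\in\ran(T^*)$.

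Parts~(c) and~(d) both rest on one explicit identification, which I would establish first: for $u=T^*Tx$ with $x\in\dom(T^*T)$, writing $x_0=x-P_{\ker(T)}x$ for the component of $x$ in $\ker(T)^\perp$, one has $x_0\in\dom(T)\ominus\ker(T)=\dom(i_T)$ and $Tx_0=Tx$, so that $(v,x_0)_T=\langle Tv,Tx\rangle_\cG=\langle i_T v,T^*Tx\rangle_\cH=(v,i_T^*u)_T$ for all $v\in\dom(i_T)$; hence $i_T^*u=x_0$ and therefore $i_Ti_T^*u=x_0=x-P_{\ker(T)}x$ in $\cH$. This gives $\ran(T^*T)\subseteq\dom(i_Ti_T^*)$ and, upon restricting to $x\perp\ker(T)$ (note $\ker(T^*T)=\ker(T)$), the identity \eqref{e:ititstar}; applying $T^*T$ and using $T^*TP_{\ker(T)}x=0$ yields $(i_Ti_T^*)\ran(T^*T)\subseteq\dom(T^*T)$ together with \eqref{e:tetestart}. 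When $\ker(T)=0$ the projection $P_{\ker(T)}$ vanishes, so $i_Ti_T^*T^*Tx=x$ for all $x\in\dom(T^*T)$ and $T^*Ti_Ti_T^*u=u$ for $u\in\ran(T^*T)$; running these identities backward gives the reverse inclusions $\dom(i_Ti_T^*)\subseteq\ran(T^*T)$ and $\dom(T^*T)\subseteq(i_Ti_T^*)\ran(T^*T)$, completing~(c) and~(d).

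The main obstacle I anticipate is bookkeeping at the level of the completion $\cD(T)$: a priori $i_T^*y$ lives in $\cD(T)$ rather than in $\dom(T)$, so every computation of $(\cdot,\cdot)_T$ must be read through $\widehat T$, and one must verify that the candidate $x_0=x-P_{\ker(T)}x$ genuinely lands in the core $\dom(i_T)$ (so that $i_T$ may actually be applied to it) rather than merely in $\cD(T)$. The reverse inclusions under $\ker(T)=0$ carry the real content: each amounts to recognizing a $|\cdot|_T$-bounded linear functional and re-expressing it, via Riesz representation and the characterization of $T^*$, as membership in $\ran(T^*)$ or $\ran(T^*T)$; the density of $\dom(T)$ in $\cH$ and the closedness of $T$ are exactly what make these identifications legitimate.
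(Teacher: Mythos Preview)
The paper does not actually prove this proposition; it is quoted from \cite{CojGh5} as a recalled result, so there is no in-paper argument to compare against. Your proposal is correct and follows the natural route: identify $i_T^*$ via the defining relation $(x,i_T^*y)_T=\langle i_Tx,y\rangle_\cH$, use the isometry $\widehat T$ to pass between $(\cdot,\cdot)_T$ and $\langle T\cdot,T\cdot\rangle_\cG$, and read off the identities.

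One point worth tightening: in your treatment of~(c), the phrase ``running these identities backward'' for the reverse inclusion $\dom(i_Ti_T^*)\subseteq\ran(T^*T)$ under $\ker(T)=0$ hides a genuine step. The identities $i_Ti_T^*T^*Tx=x$ and $T^*Ti_Ti_T^*u=u$ (valid a priori only for $u\in\ran(T^*T)$) do not by themselves show that an arbitrary $y\in\dom(i_Ti_T^*)$ lies in $\ran(T^*T)$. You need the explicit argument you sketch in your closing paragraph: for such $y$, set $z=i_T^*y\in\dom(i_T)=\dom(T)$ and compute, for all $x\in\dom(T)$,
\[
\langle x,y\rangle_\cH=(x,i_T^*y)_T=\langle Tx,Tz\rangle_\cG,
\]
which says precisely $Tz\in\dom(T^*)$ and $T^*Tz=y$. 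Since you already identify this mechanism, the proof is complete once you make the step explicit rather than leave it to ``running backward''. The reverse inclusion in~(d) is, by contrast, genuinely immediate from $i_Ti_T^*T^*Tx=x$.
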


We can view the Hilbert space $\cD(T)$ and its closed embedding $i_T$ as a 
model for the abstract definition of a closed embedding. More precisely, let 
$(\cH_+;\|\cdot\|_+)$ be a Hilbert space closely embedded in the Hilbert space
($\cH;\|\cdot\|_\cH)$ and let $j_+$ denote the 
underlying closed embedding. Since $j_+$ is one-to-one, we can define a 
linear operator $T$ with $\dom(T)=\ran(j_+)\oplus \ker(j_+^*)$, viewed as a dense
linear manifold in $\cH$, and valued in $\cH_+$,
defined by $T(x\oplus x_0)=j_+^{-1}x$, for all 
$x\in\ran(j_+)$ and $x_0\in \ker(j_+^*)$. Then $\ker(T)=\ker(j_+^*)$ and, 
for all $x\in \ran(j_+)$ we have $x=j_+u$ for a unique $u=x\in\dom(j_+)$, hence
\begin{equation*} \|x\|_+=\|Tx\|_+=|x|_T.
\end{equation*}
Thus, modulo a completion of $\dom(j_+)$ which may be different, 
the Hilbert space $(\cD(T);|\cdot|_T)$ coincides with the Hilbert space 
$(\cH_+;\|\cdot\|_+)$.

\subsection{The Space $\cR(T)$.}\label{ss:rt}
In this subsection we recall
a second model of closely embedded Hilbert spaces, based on a
construction associated to ranges of
general linear operators that was used in \cite{CojGh3}.

Let $T$ be a linear operator acting from a Hilbert space
$\cG$ to another Hilbert space $\cH$ and such that its null space $\ker(T)$ is 
closed. Introduce a pre-Hilbert space structure on $\ran(T)$ by the positive 
definite inner product $\langle \cdot,\cdot \rangle_T$ defined by
\begin{equation} \label{e:uvete}
\langle u,v \rangle_T = \langle x,y \rangle_\cG
\end{equation}
for all $u = T x$, $ v = T y$, $ x,y \in \dom (T)\ominus\ker (T)$. 
Let $\cR (T)$ be the completion of the pre-Hilbert
space $\ran (T)$ with respect to the corresponding norm $\| \cdot
\|_T$, where $\| u \|_T^{2}  = \langle u,u \rangle_T$, for $ u \in
\ran (T)$. The inner product and the norm on $\cR(T)$ are
denoted by $\langle \cdot,\cdot \rangle_T$ and, respectively, $\|\cdot\|_T$
throughout.
Consider the \emph{embedding operator} $j_T \colon\dom (j_T)
(\subseteq\cR (T)) \ra \cH$ with domain $\dom (j_T) = \ran (T)$ 
defined by
\begin{equation}\label{e:jete}
j_T u = u, \quad u \in \dom (j_T)=\ran(T).\end{equation}

Another way of viewing the definition of the Hilbert space $\cR(T)$
is by means of a certain factorisation of $T$. More precisely, letting $T$ be a linear 
operator with domain dense in the Hilbert space $\cG$, valued in the Hilbert
space $\cH$, and with closed null space, considering the Hilbert
space $\cR(T)$  and the embedding $j_T$ defined as in \eqref{e:uvete} and,
respectively, \eqref{e:jete}, there exists a unique coisometry
$U_T\in\cB(\cG,\cR(T))$, such that
$\ker(U_T)=\ker(T)$ and $T=j_TU_T$, cf.\ Lemma~2.5 in \cite{CojGh3}. 
In addition, with the notation as before,
the operator $T$ is closed if and only if the embedding operator $j_T$ is
closed, cf.\ Proposition~2.7 in \cite{CojGh3}. 

\begin{lemma}[\cite{CojGh5}, Lemma 3.8]\label{l:star}
Let $T\in\cC(\cG,\cH)$. Then $\dom(j_T^*)\supseteq \dom(T^*)$ and, if $T$ is 
one-to-one, then $\dom(j_T^*)=\dom(T^*)$.\end{lemma}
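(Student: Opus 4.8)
The plan is to exploit the factorisation $T=j_TU_T$ recalled just before the statement, where $U_T\in\cB(\cG,\cR(T))$ is the coisometry with $\ker(U_T)=\ker(T)$. The first thing I would record is that, for $x\in\dom(T)\ominus\ker(T)$, the vector $U_Tx$ is exactly the element $Tx\in\ran(T)$ regarded inside $\cR(T)$: since $U_T$ is a coisometry it acts isometrically on $\ker(U_T)^\perp=\ker(T)^\perp$, so $\|U_Tx\|_T=\|x\|_\cG=\|Tx\|_T$, which together with $T=j_TU_T$ forces the identification $U_Tx=Tx$ in $\cR(T)$. I would also note that $P:=U_T^*U_T$ is the orthogonal projection of $\cG$ onto $\ker(T)^\perp$, again because $U_T$ is a coisometry. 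These two facts are the whole engine of the proof.

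For the inclusion $\dom(T^*)\subseteq\dom(j_T^*)$, I would take $g\in\dom(T^*)$ and propose the candidate $y:=U_TT^*g\in\cR(T)$. Then for every $u=Tx\in\ran(T)=\dom(j_T)$ with $x\in\dom(T)\ominus\ker(T)$ one computes
\[
\langle j_Tu,g\rangle_\cH=\langle Tx,g\rangle_\cH=\langle x,T^*g\rangle_\cG=\langle U_Tx,U_TT^*g\rangle_T=\langle u,y\rangle_T,
\]
where the middle equality uses $x\in\ker(T)^\perp$ together with $U_T^*U_T=P$ and $Px=x$. Since $\ran(T)$ is dense in $\cR(T)$, this exhibits $g\in\dom(j_T^*)$ with $j_T^*g=U_TT^*g$, proving the first assertion.

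For the converse under the hypothesis $\ker(T)=0$, the decisive point is that a coisometry with trivial kernel is unitary, so $U_T$ becomes unitary and $U_T^*=U_T^{-1}$. Given $g\in\dom(j_T^*)$ with $y=j_T^*g$, I would write, for every $x\in\dom(T)=\dom(T)\ominus\ker(T)$,
\[
\langle Tx,g\rangle_\cH=\langle j_T(U_Tx),g\rangle_\cH=\langle U_Tx,y\rangle_T=\langle x,U_T^*y\rangle_\cG,
\]
using $U_Tx\in\ran(T)=\dom(j_T)$ and the boundedness of $U_T$. This identity exhibits $U_T^*y\in\cG$ as a vector realising the defining condition for membership in $\dom(T^*)$, whence $g\in\dom(T^*)$. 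Combined with the first part, this yields $\dom(j_T^*)=\dom(T^*)$.

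The step needing the most care is the bookkeeping around the identification $U_Tx=Tx$ and the projection identity $U_T^*U_T=P$: it is exactly this projection that makes the inclusion go through while only producing information about the $\ker(T)^\perp$ component, and it is the injectivity hypothesis that removes the projection by upgrading $U_T$ to a unitary, thereby allowing the adjoint relation to be read back for all $x\in\dom(T)$ and closing the reverse inclusion.
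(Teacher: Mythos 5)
Your argument is correct and follows the natural route through the coisometric factorisation $T=j_TU_T$ that the paper recalls just before the statement; modulo producing the representing vectors $j_T^*g=U_TT^*g$ and $T^*g=U_T^*j_T^*g$ explicitly rather than merely checking boundedness of the relevant functionals, this is essentially the argument of the cited source. One small remark: the chain $\langle Tx,g\rangle_\cH=\langle U_Tx,j_T^*g\rangle_T=\langle x,U_T^*j_T^*g\rangle_\cG$ in your second part uses only the boundedness of $U_T$ and the fact that $U_Tx\in\dom(j_T)$ for $x\in\dom(T)$, not the unitarity of $U_T$, so your proof actually yields $\dom(j_T^*)\subseteq\dom(T^*)$ without the injectivity hypothesis --- the hypothesis does not play the role you attribute to it, but this only means you have proved slightly more than the lemma asserts.
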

 
The definition of closely embedded Hilbert spaces is 
consistent with the model $\cR(T)$ for $T\in \cC(\cG,\cH)$, more precisely,  
if $\cH_+$ is closely embedded in $\cH$ then $\cR(j_+)=\cH_+$ and 
$\|x\|_+=\|x\|_{j_+}$.
On the other hand, the model for the abstract definition of closely embedded Hilbert 
spaces follows the results on the Hilbert space $\cR(T)$. Thus, if
$T\in\cC(\cG,\cH)$ then the Hilbert space $\cR(T)$, with its canonical
embedding $j_T$ as defined in \eqref{e:uvete} and \eqref{e:jete}, is a Hilbert
space closely embedded in $\cH$. Conversely, if $\cH_+$ is a Hilbert space closely
embedded in $\cH$, and $j_+$ denotes its canonical closed embedding, then
$\cH_+$ can be naturally viewed as the Hilbert space of type $\cR(j_+)$. In 
addition, $TT^*=j_Tj_{T}^*$, cf.\ Proposition~3.2 in \cite{CojGh3}.

The closely embedded Hilbert space $\cR(T)$ is essentially an 
operator range construction.
\begin{theorem}[\cite{CojGh3}, Theorem 2.10]\label{t:range}
Let $T\in\cC(\cG,\cH)$ be nonzero and $u\in\cH$. Then
  $u\in \ran(T)$ if and only if there exists $\mu_u\geq 0$ such that
  $|\langle u,v\rangle_\cH| \leq \mu_u \|T^*v\|_\cG$ for all
  $v\in\dom(T^*)$. Moreover, if $u\in \ran(T)$ then
\begin{equation*} \|u\|_T=\sup\bigr\{ \frac{|\langle
  u,v\rangle_\cH|}{\|T^*v\|_\cG}  \mid
  v\in\dom(T^*),\ T^*v\neq 0\bigl\},\end{equation*}
where $\|\cdot\|_T$ is the norm associated to the inner product defined
as in \eqref{e:uvete}.
\end{theorem}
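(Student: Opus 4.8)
The plan is to treat the two directions of the equivalence separately and to read off the norm formula from the same estimates. For the forward direction, suppose $u\in\ran(T)$ and write $u=Tx$ with $x\in\dom(T)\ominus\ker(T)$, so that by definition $\|u\|_T=\|x\|_\cG$. For every $v\in\dom(T^*)$ the adjoint relation gives $\langle u,v\rangle_\cH=\langle Tx,v\rangle_\cH=\langle x,T^*v\rangle_\cG$, whence Cauchy--Schwarz yields $|\langle u,v\rangle_\cH|\le\|x\|_\cG\,\|T^*v\|_\cG=\|u\|_T\,\|T^*v\|_\cG$. Thus the required inequality holds with $\mu_u=\|u\|_T$, and at the same time the supremum in the norm formula is bounded above by $\|u\|_T$.

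The converse is the substantive step. Assuming the estimate $|\langle u,v\rangle_\cH|\le\mu_u\|T^*v\|_\cG$ for all $v\in\dom(T^*)$, I would define a functional $\psi$ on $\ran(T^*)$ by $\psi(T^*v):=\langle v,u\rangle_\cH$. This is well defined precisely because the hypothesis forces $\langle u,v\rangle_\cH=0$ whenever $T^*v=0$; it is linear in the argument $T^*v$, and the same hypothesis shows it is bounded with $\|\psi\|\le\mu_u$. Extending $\psi$ by continuity to $\ol{\ran(T^*)}$ and invoking the Riesz representation theorem, I obtain a vector $x\in\ol{\ran(T^*)}$ with $\|x\|_\cG=\|\psi\|\le\mu_u$ and $\langle T^*v,x\rangle_\cG=\psi(T^*v)=\langle v,u\rangle_\cH$ for all $v\in\dom(T^*)$. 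The point is that this last identity is exactly the defining relation for the adjoint of $T^*$: it says $x\in\dom((T^*)^*)$ with $(T^*)^*x=u$. Since $T\in\cC(\cG,\cH)$ is closed and densely defined, $(T^*)^*=T^{**}=T$, so $x\in\dom(T)$ and $Tx=u$; in particular $u\in\ran(T)$. I expect this recognition --- that the Riesz identity coincides with the adjoint relation and that closedness delivers $T^{**}=T$ --- to be the crux of the argument.

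It remains to complete the norm formula, i.e.\ the inequality $\|u\|_T\le\sup\{|\langle u,v\rangle_\cH|/\|T^*v\|_\cG\}$. Here I would use that the vector $x$ furnished above lies in $\ol{\ran(T^*)}=\ker(T)^\perp$, so that in fact $x\in\dom(T)\ominus\ker(T)$ and $\|u\|_T=\|x\|_\cG$; since $\|x\|_\cG=\|\psi\|=\sup\{|\langle v,u\rangle_\cH|/\|T^*v\|_\cG : v\in\dom(T^*),\ T^*v\neq0\}$, the supremum equals $\|u\|_T$ exactly, matching the upper bound from the first paragraph. Equivalently, one can obtain the lower bound directly: starting from $u=Tx$ with $x\in\ker(T)^\perp=\ol{\ran(T^*)}$, choose $v_n\in\dom(T^*)$ with $T^*v_n\to x$ and compute that $|\langle u,v_n\rangle_\cH|/\|T^*v_n\|_\cG=|\langle x,T^*v_n\rangle_\cG|/\|T^*v_n\|_\cG\to\|x\|_\cG=\|u\|_T$. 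Finally, the hypothesis $T\neq0$ is what guarantees $\ran(T^*)\neq\{0\}$, so that the supremum is taken over a nonempty set and is meaningful.
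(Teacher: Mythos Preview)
Your proof is correct. Note, however, that the present paper does not actually prove this theorem: it is quoted from \cite{CojGh3} (Theorem~2.10 there) and stated here without proof, so there is no ``paper's own proof'' against which to compare. Your argument is the standard one for this Douglas-type range characterisation: the forward direction via Cauchy--Schwarz on $\langle x,T^*v\rangle_\cG$, the converse via the bounded functional $\psi(T^*v)=\langle v,u\rangle_\cH$ on $\ran(T^*)$, Riesz representation on $\ol{\ran(T^*)}$, and the identification $(T^*)^*=T$ for closed densely defined $T$. The only point worth flagging is that you silently use that $T^*$ is densely defined (so that $(T^*)^*$ is meaningful and equals $T$); this holds because $T\in\cC(\cG,\cH)$ is closed, hence closable, which is equivalent to $\dom(T^*)$ being dense.
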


The uniqueness of a closed embedded Hilbert space in terms of its kernel operator 
takes a slightly weaker form than in the case of continuous embeddings.

\begin{theorem}[\cite{CojGh3}, Theorem 3.4]\label{t:kernel} Let $\cH_+$ be a Hilbert space
closely embedded in $\cH$, with $j_+ : \cH_+ \ra
\cH$ its densely defined and closed embedding operator, and let $A
= j_+ j_+^*$ be the kernel operator associated to this embedding.
Then

\nr{a} $\ran(A^{1/2})=\dom(j_+)$ is dense in both $\cR(A^{1/2})$ and $\cH_+$.

\nr{b} For all $x\in\ran(A^{1/2})$ and all $y\in \dom(A)$ we have $\langle
x,y\rangle_\cH=\langle x, Ay\rangle_+=\langle
x,Ay\rangle_{A^{1/2}}$.

\nr{c} $\ran(A)$ is dense in both $\cR (A^{1/2})$ and $\cH_+$.

\nr{d} For any $x\in\dom(j_+)$ we have
\begin{equation*} \|x\|_+=\sup\bigl\{ \frac{|\langle
    x,y\rangle_\cH|}{\|A^{1/2}y\|_\cH} \mid y\in\dom(A^{1/2}),\ A^{1/2}y\neq
    0\bigr\}.
\end{equation*}

\nr{e} The identity operator $:\ran(A))(\subseteq\cR (A^{1/2})) \ra
\cH_+$ uniquely extends to a unitary operator $V:\cR(A^{1/2})\ra \cH_+$
such that $
V A x = j_+^* x$, for all $ x \in \dom(A)$.
 \end{theorem}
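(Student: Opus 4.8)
The plan is to read off the five statements from the operator range description of Theorem~\ref{t:range}, applied simultaneously to the two closed densely defined operators $j_+\colon\cH_+\ra\cH$ and $A^{1/2}\colon\cH\ra\cH$, which I claim produce the same range and the same norm once $\dom(A)$ is identified as a common core on which the two relevant denominators agree. Since $j_+$ is closed and densely defined, $A=j_+j_+^*$ is positive selfadjoint and $A^{1/2}\in\cC(\cH,\cH)$; by von Neumann's theorem $\dom(A)=\dom(j_+j_+^*)$ is a core for $j_+^*$, and by the spectral theorem it is also a core for $A^{1/2}$. For $v\in\dom(A)$ one computes $\|A^{1/2}v\|_\cH^2=\langle Av,v\rangle_\cH=\langle j_+^*v,j_+^*v\rangle_+=\|j_+^*v\|_+^2$, so the two denominators occurring in Theorem~\ref{t:range} coincide on $\dom(A)$. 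Because $\dom(A)$ is a core for both $j_+^*$ and $A^{1/2}$, each supremum over $v\in\dom(j_+^*)$, respectively over $v\in\dom(A^{1/2})$, reduces to a supremum over $v\in\dom(A)$; hence Theorem~\ref{t:range} yields $\ran(A^{1/2})=\ran(j_+)=\dom(j_+)$ together with $\|u\|_{A^{1/2}}=\|u\|_{j_+}=\|u\|_+$ for every $u$ in this common set, recalling that $\cR(j_+)=\cH_+$ with matching norms. Density of $\dom(j_+)$ in $\cH_+$ follows from axiom (ceh1), while density of $\ran(A^{1/2})$ in $\cR(A^{1/2})$ is built into the construction of $\cR(A^{1/2})$; this gives (a). Part~(d) is then immediate, being exactly the norm formula of Theorem~\ref{t:range} for $T=A^{1/2}$ transported through $\|u\|_{A^{1/2}}=\|u\|_+$.

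For (b) I would argue directly from the defining relations. Given $x\in\ran(A^{1/2})=\dom(j_+)$ and $y\in\dom(A)$, the vector $Ay=j_+j_+^*y$ is the embedded copy of $j_+^*y\in\cH_+$, so $\langle x,Ay\rangle_+=\langle x,j_+^*y\rangle_+=\langle j_+x,y\rangle_\cH=\langle x,y\rangle_\cH$, using $j_+x=x$. Writing $x=A^{1/2}a$ and $Ay=A^{1/2}(A^{1/2}y)$ with representatives orthogonal to $\ker(A^{1/2})$, the definition \eqref{e:uvete} of the $\cR(A^{1/2})$ inner product gives $\langle x,Ay\rangle_{A^{1/2}}=\langle a,A^{1/2}y\rangle_\cH=\langle A^{1/2}a,y\rangle_\cH=\langle x,y\rangle_\cH$, so all three quantities agree.

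The remaining parts follow by a density-and-isometry argument, and the bookkeeping of the three simultaneous identifications of a vector (in $\cH$, in $\cR(A^{1/2})$, and in $\cH_+$) is what needs the most care. First, $\ran(A)$ is dense in $\cR(A^{1/2})$: any $A^{1/2}a\in\ran(A^{1/2})$ with $a\perp\ker(A^{1/2})$ is approximated in $\|\cdot\|_{A^{1/2}}$ by $A^{1/2}a'$, $a'\in\ran(A^{1/2})$, since $\|A^{1/2}a-A^{1/2}a'\|_{A^{1/2}}=\|a-a'\|_\cH$ and $\ran(A^{1/2})$ is dense in $\ol{\ran(A^{1/2})}$. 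As $\ran(A)\subseteq\dom(j_+)$ and the norms $\|\cdot\|_{A^{1/2}}$ and $\|\cdot\|_+$ coincide there, the identity map on $\ran(A)$ is isometric with dense domain in $\cR(A^{1/2})$; transporting density through it shows $\ran(A)$ is dense in $\cH_+$, which is (c). This isometry then extends uniquely to a unitary $V\colon\cR(A^{1/2})\ra\cH_+$, proving (e); and for $x\in\dom(A)$ the element $Ax\in\ran(A)$, viewed in $\cH_+$, is precisely $j_+^*x$, whence $VAx=j_+^*x$. The only genuine obstacle is the core argument of the first paragraph, namely that restricting each supremum to $\dom(A)$ leaves it unchanged, which rests on $\dom(A)$ being a common core for $j_+^*$ and for $A^{1/2}$.
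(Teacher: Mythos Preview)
The paper does not supply a proof of this theorem here; it is quoted from \cite{CojGh3} (Theorem~3.4 there), so there is no in-text argument to compare against. Your proof stands on its own, and the central idea---that $\dom(A)$ is simultaneously a core for $j_+^*$ and for $A^{1/2}$, with $\|j_+^*v\|_+=\|A^{1/2}v\|_\cH$ on that core---is correct and is exactly what makes Theorem~\ref{t:range} do the work for (a), (d), and (e). Parts (a), (b), (d), (e) are fine as written.

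There is one small gap in your proof of (c). You approximate $a\in\ker(A^{1/2})^\perp$ by $a'\in\ran(A^{1/2})$ and then write $A^{1/2}a'$; but for $A^{1/2}a'$ to be defined you need $a'\in\dom(A^{1/2})$ as well, and $\ran(A^{1/2})\not\subseteq\dom(A^{1/2})$ in general. What you actually need is $a'\in\ran(A^{1/2})\cap\dom(A^{1/2})=A^{1/2}(\dom(A))$, so that $A^{1/2}a'\in\ran(A)$. Density of $A^{1/2}(\dom(A))$ in $\ker(A^{1/2})^\perp$ follows from the fact, which you already invoked, that $\dom(A)$ is a core for $A^{1/2}$: given $a\in\ker(A^{1/2})^\perp$, pick $b_n\in\dom(A^{1/2})$ with $b_n\to b$ and $A^{1/2}b_n\to a$ for some $b$, then replace $b_n$ by $c_n\in\dom(A)$ with $c_n\to b_n$ in graph norm, so $A^{1/2}c_n\to a$ in $\cH$. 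With this correction your argument for (c) is complete.
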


\subsection{Triplets of Closely Embedded Hilbert Spaces}\label{ss:tcehs}

By definition, $(\cH_+;\cH_0;\cH_-)$ is called a \emph{triplet of closely embedded 
Hilbert spaces} if:

\begin{itemize}
\item[(th1)] $\cH_+$ is a Hilbert space 
closely embedded in the Hilbert space
$\cH_0$, with the closed embedding denoted by $j_+$, and such that $\ran(j_+)$ is 
dense in $\cH_0$.
\item[(th2)] $\cH_0$ is closely embedded in the Hilbert space $\cH_-$, 
with the closed embedding denoted by $j_-$, and such that $\ran(j_-)$ is dense in 
$\cH_-$.
\item[(th3)] $\dom(j_+^*)\subseteq\dom(j_-)$ and for every vector
$y\in\dom(j_-)\subseteq \cH_0$ we have
\begin{equation}\label{e:dual}
 \|y\|_-=\sup\bigl\{\frac{|\langle x,y\rangle_{\cH_0}|}{\|x\|_+}\mid 
x\in\dom(j_+),\ x\neq 0\bigr\}.
\end{equation}
\end{itemize}

\begin{remark} (1) Let us first observe that, by axiom (th3),  
it follows that actually the inclusion in \eqref{e:dual} is an equality
\begin{equation}\label{e:domeq}\dom(j_+^*)=\dom(j_-).\end{equation}

(2) Let $\cH_+$, $\cH_0$, and $\cH_-$ be three Hilbert spaces. Then 
$(\cH_+;\cH_0;\cH_-)$ makes a triplet of closely embedded Hilbert spaces 
if and only the axioms (th1), (th2), and
\begin{itemize}
\item[(th3)$^\prime$] $\dom(j_+^*)=\dom(j_-)$ and $\|j_-y\|_-=\|j_+^*y\|_+$, for all 
$y\in\dom(j_-)$.
\end{itemize}
hold.

Indeed, let us assume that the axioms (th1) and (th2) hold. If (th3) holds as well, 
then, by the previous remark, $\dom(j_+^*)=\dom(j_-)$ holds. Then, 
applying Theorem~\ref{t:range} to the closed densely defined operator $j_+$ we 
obtain that $\|j_-y\|_-=\|j_+^*y\|_+$, for all $y\in\dom(j_-)$, hence the axiom 
(th3)$^\prime$ holds as well. Similarly we show that (th3)$^\prime$ implies (th3). 
\end{remark}

The concept of a triplet of closely embedded Hilbert spaces was obtained in 
\cite{CojGh5} as a consequence of a model, starting from a positive selfadjoint 
operator $H$ in a Hilbert space $\cH$ with trivial kernel, and a factorisation 
$H=T^*T$, with $T$ a closed operator densely defined in $\cH$ having trivial kernel 
and dense range in the Hilbert space $\cG$, 
and based on the spaces of type $\cD(T)$ and $\cR(T)$, see 
subsections~\ref{ss:dt} and \ref{ss:rt}. Under these assumptions, 
$(\cD(T);\cH;\cR(T^*))$ is a triplet of closely embedded Hilbert spaces 
with some additional remarkable properties.

\begin{theorem}[\cite{CojGh5}, Theorem 4.1]\label{t:model} Let $H$ be a positive selfadjoint operator in the Hilbert 
space $\cH$, with trivial null space. Let $T\in\cC(\cH,\cG)$ be such that 
$\ran(T)$ is dense in $\cG$ and $H=T^*T$. Then:
\begin{itemize}
\item[(i)] The Hilbert space $\cD(T)$ is closely embedded in $\cH$ with its closed 
embedding $i_T$ having range dense in $\cH$, and its kernel operator 
$A=i_Ti_T^*$ 
coincides with $H^{-1}$.
\item[(ii)] $\cH$ is closely embedded in the Hilbert space $\cR(T^*)$ with its closed 
embedding $j_{T^*}^{-1}$ having range dense in $\cR(T^*)$. The kernel operator 
$B=j_{T^*}^{-1}{j_{T^*}^{-1*}}$ of this closed embedding is unitary equivalent 
with $A=H^{-1}$.
\item[(iii)] The operator $i_T^*|\ran(T^*)$ 
extends uniquely to a unitary operator 
$\widetilde A$ between the Hilbert spaces $\cR(T^*)$ and $\cD(T)$. In addition,
$\widetilde A$ is the unique unitary extension of the kernel operator $A$, when 
viewed as an operator acting from $\cR(T^*)$ and valued in $\cD(T)$, as well.
\item[(iv)] The operator $H$ can be viewed as a linear operator with domain dense 
in $\cD(T)$ and dense range in $\cR(T^*)$ and, when viewed in this way, it is isometric and
it extends uniquely to a unitary operator 
$\widetilde H\colon \cD(T)\ra\cR(T^*)$, and $\widetilde H={\widetilde A}^{-1}$.
\item[(v)] Letting $V_T\in\cB(\cG,\cD_T)$ denote the unitary operator such that 
$T^{-1}=i_T V_T$ and $U_{T^*}\in\cB(\cG,\cR(T^*))$ denote the unitary 
operator such 
that $T^*=U_{T^*}j_{T^*}$, we  have $\widetilde H=U_{T^*}V_T^{-1}$.
\item[(vi)] The operator $\Theta$ defined by 
\begin{equation*}
\Theta \colon \cR(T^*)\ra\cD(T)^*,\quad (\Theta\alpha)(x):=(\widetilde A\alpha,x)_T,
\quad \alpha\in\cR(T^*),\ x\in\cD(T),
\end{equation*} provides a canonical 
identification of the Hilbert space $\cR(T^*)$ with the conjugate dual
space $\cD(T)^*$ and, for all $y\in\dom(T^*)$
\begin{equation*}
\|y\|_{T^*}=\sup\bigl\{
\frac{|\langle y,x\rangle_\cH|}{|x|_T}\mid x\in\dom(T)
\setminus\{0\}\bigr\}.
\end{equation*}
\end{itemize}
\end{theorem}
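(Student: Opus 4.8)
The plan is to let the two renorming models of Subsections~\ref{ss:dt} and \ref{ss:rt} carry the argument and to read off all six assertions from the two canonical unitaries attached to $T$. First observe that $\ker(H)=0$ forces $\ker(T)=0$ (if $Tx=0$ then $Hx=T^*Tx=0$), so $T\in\cC(\cH,\cG)$ is one-to-one with dense range and $\ker(T^*)=\cG\ominus\ol{\ran(T)}=0$. For (i) I would simply invoke Proposition~\ref{p:dete}: part (a) gives that $\cD(T)$ is closely embedded in $\cH$ with closed embedding $i_T$, whose range $\dom(T)\ominus\ker(T)=\dom(T)$ is dense; parts (c),(d), read with $\ker(T)=0$, yield $\dom(A)=\ran(H)$ together with $HAu=u$ for all $u\in\ran(H)$. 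Since $\dom(H^{-1})=\ran(H)$ as well and $H$ is injective, $Au=H^{-1}u$, that is, $A=H^{-1}$.

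For (ii), the operator $j_{T^*}^{-1}\colon\cH\ra\cR(T^*)$ with domain $\ran(T^*)$ is closed (being the inverse of the closed injection $j_{T^*}$) and has range $\ran(T^*)$ dense in $\cR(T^*)$; density of $\ran(T^*)$ in $\cH$ gives (ceh1). For the kernel operator I would use the factorisation $T^*=j_{T^*}U_{T^*}$ from Subsection~\ref{ss:rt}, where $U_{T^*}\in\cB(\cG,\cR(T^*))$ is a coisometry with $\ker(U_{T^*})=\ker(T^*)=0$, hence unitary. Then $B=j_{T^*}^{-1}(j_{T^*}^{-1})^*=(j_{T^*}^*j_{T^*})^{-1}$ and, since $j_{T^*}=T^*U_{T^*}^{-1}$, one computes $j_{T^*}^*j_{T^*}=U_{T^*}(TT^*)U_{T^*}^{-1}$; as $TT^*\simeq T^*T=H$ for the injective dense-range $T$, this gives $B\simeq H^{-1}=A$.

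The backbone of (iii)--(v) is the pair of unitaries $V_T=\widehat T^{-1}\in\cB(\cG,\cD(T))$ (where $\widehat T$ is the isometric extension of $Ti_T$ of Subsection~\ref{ss:dt}, onto because $\ran(T)$ is dense) and $U_{T^*}$ above. I would first check, for $v\in\dom(T^*)$ and $u=T^*v\in\ran(T^*)$, that $i_T^*u=V_T v$: pairing $i_T^*u$ in $\cD(T)$ against $\widehat T x=Tx$ for $x\in\dom(T)$ and using $\langle T^*v,x\rangle_\cH=\langle v,Tx\rangle_\cG$ with $\ran(T)$ dense forces $\widehat T i_T^*u=v$. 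Since $U_{T^*}^{-1}u=v$, this reads $i_T^*u=V_TU_{T^*}^{-1}u$ on the dense set $\ran(T^*)=\dom(i_T^*)$ (Proposition~\ref{p:dete}(b)), so $i_T^*|\ran(T^*)$ extends uniquely to the unitary $\widetilde A:=V_TU_{T^*}^{-1}\colon\cR(T^*)\ra\cD(T)$; the same identity with $u=T^*Tx$ shows $\widetilde A$ is also the unique unitary extension of $A$ read from $\cR(T^*)$ to $\cD(T)$, giving (iii). For (iv), viewing $H=T^*T$ as an operator $\cD(T)\ra\cR(T^*)$ on $\dom(H)$ (dense in $\cD(T)$ by Theorem~\ref{t:kernel}(c)), the identity $\|Hx\|_{T^*}=\|Tx\|_\cG=|x|_T$ shows it is isometric, and $\widetilde A(Hx)=i_T^*(T^*Tx)=V_T(Tx)=x$ identifies its unitary extension as $\widetilde H=\widetilde A^{-1}$; then (v) is immediate, $\widetilde H=\widetilde A^{-1}=U_{T^*}V_T^{-1}$.

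Finally (vi) is formal: since $\widetilde A\colon\cR(T^*)\ra\cD(T)$ is unitary and the Riesz map identifies $\cD(T)$ with $\cD(T)^*$, the assignment $\Theta\alpha=(\widetilde A\alpha,\cdot)_T$ is an isometric identification of $\cR(T^*)$ with $\cD(T)^*$, while the displayed norm formula is Theorem~\ref{t:range} applied to $T^*$ (whose adjoint is $T$), rewritten through $\|Tx\|_\cG=|x|_T$. The one place demanding genuine care is in (i)--(ii): upgrading the mutual-inverse relations and the equivalence $TT^*\simeq T^*T$ from dense manifolds to honest identities/equivalences of the (possibly unbounded, positive selfadjoint) operators $A$, $B$, $H^{-1}$; the remainder is bookkeeping of domains across the three spaces $\cH$, $\cD(T)$, $\cR(T^*)$.
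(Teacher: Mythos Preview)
The paper does not prove this theorem: it is quoted from \cite{CojGh5} (as Theorem~4.1 there) and stated here without proof, so there is no argument in the present paper to compare against.

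That said, your sketch is sound and is exactly the kind of argument the machinery of Section~\ref{s:tcehs} is designed to support. The reduction to $\ker(T)=0$, the use of Proposition~\ref{p:dete}(a)--(d) for (i), the factorisations $T^*=j_{T^*}U_{T^*}$ and $T^{-1}=i_TV_T$ with the resulting identity $i_T^*|\ran(T^*)=V_TU_{T^*}^{-1}$, and the appeal to Theorem~\ref{t:range} for the norm formula in (vi), are all correct and are in fact the intended route. Your own caveat at the end is the only place requiring real care: in (ii) the passage from $j_{T^*}=T^*U_{T^*}^{-1}$ to $j_{T^*}^*j_{T^*}=U_{T^*}(TT^*)U_{T^*}^{-1}$ and then to $B=(j_{T^*}^*j_{T^*})^{-1}$ uses $(S^{-1})^*=(S^*)^{-1}$ and $(CU)^*=U^*C^*$ for $U$ bounded and $C$ closed, which are legitimate here; and the equivalence $TT^*\simeq T^*T$ is the polar decomposition $T=W|T|$ with $W$ unitary (since $T$ is injective with dense range), giving $TT^*=W(T^*T)W^*$. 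One small slip: in (vi) the norm identity is asserted for $y\in\dom(T^*)$ in the statement, but Theorem~\ref{t:range} (applied to $T^*$, with $T^{**}=T$) delivers it for $y\in\ran(T^*)=\dom(j_-)$; this is the correct domain and is what your argument actually proves.
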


On the other hand, the properties of the triplet $(\cD(T);\cH;\cR(T^*))$ as exhibited in 
the previous theorem can be proven for any other triplet of closely embedded Hilbert 
spaces.

\begin{theorem}[\cite{CojGh5}, Theorem 5.1]\label{t:triplet} Let $(\cH_+;{\cH_0};\cH_-)$ be a triplet of closely 
embedded Hilbert spaces, and let $j_\pm$ denote the corresponding closed 
embeddings of $\cH_+$ in ${\cH_0}$ and, respectively, of ${\cH_0}$ in $\cH_-$. 
Then:

\nr{a} The kernel operator $A=j_+j_+^*$ is positive selfadjoint in ${\cH_0}$ and $0$ 
is not an eigenvalue for $A$. Also, the Hamiltonian operator $H=A^{-1}$ is a 
positive selfadjoint operator in ${\cH_0}$ for which $0$ is not an eigenvalue.

\nr{b} $\dom(j_+^*)=\dom(j_-)$, the closed embeddings $j_+$ and $j_-$ are 
simultaneously continuous or not, and the operator $V=j_+^*\colon\dom(j_+^*)
(\subseteq\cH_-)\ra\cH_+$ extends uniquely to a unitary operator 
$\widetilde V\colon \cH_-\ra\cH_+$. 

\nr{c} The kernel operator $A$ can be viewed as an operator densely defined in 
$\cH_-$ with dense range in $\cH_+$, and it is a restriction of the unitary operator 
$\widetilde V$.

\nr{d} The Hamiltonian operator $H$ can be 
viewed as an operator densely defined in $\cH_+$ with range dense in 
$\cH_-$, and it is uniquely extended to a unitary operator 
$\widetilde H\colon \cH_+\ra\cH_-$, and $\widetilde H=\widetilde{V}^{-1}$.

\nr{e} 
The operator $\Theta$ defined by $(\Theta y)(x)=\langle\widetilde Vy,x\rangle_+$, 
for all $y\in\cH_-$ and all $x\in\cH_+$ provides a unitary identification of $\cH_-$ 
with the conjugate dual space $\cH_+^*$.
\end{theorem}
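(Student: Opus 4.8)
The plan is to treat parts (a)--(e) in order, using axioms (th1)--(th3) together with the abstract properties of the kernel operator recorded in Theorem~\ref{t:kernel}; the heart of the argument is part (b), from which (c), (d), (e) follow by bookkeeping. For (a), I would first note that by (th1) the embedding $j_+\in\cC(\cH_+,\cH_0)$ is densely defined, closed, and one-to-one, with $\ran(j_+)$ dense in $\cH_0$. Hence $A=j_+j_+^*$ is positive selfadjoint in $\cH_0$, and since $\ker(A)=\ker(j_+^*)=\ran(j_+)^\perp=\{0\}$, the value $0$ is not an eigenvalue of $A$; consequently $H=A^{-1}$ is a positive selfadjoint operator in $\cH_0$ with trivial kernel as well.

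For (b), the equality $\dom(j_+^*)=\dom(j_-)$ is exactly \eqref{e:domeq}. Put $V=j_+^*$ and view its domain inside $\cH_-$. The crucial step is to convert the duality \eqref{e:dual} into the isometry identity $\|y\|_-=\|Vy\|_+$. For $x\in\dom(j_+)$ and $y\in\dom(j_+^*)$ one has $\langle x,y\rangle_{\cH_0}=\langle j_+x,y\rangle_{\cH_0}=\langle x,Vy\rangle_+$, so, since $\dom(j_+)$ is dense in $\cH_+$ by (ceh1), the supremum in \eqref{e:dual} equals $\|Vy\|_+$; this is precisely (th3)$^\prime$. Thus $V$ is isometric from $\dom(j_-)\subseteq\cH_-$ into $\cH_+$. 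Its domain, when regarded in $\cH_-$, is $\ran(j_-)$, dense by (th2); its range $\ran(j_+^*)$ is dense in $\cH_+$ because $\ker(j_+)=\{0\}$. A densely defined isometry with dense range extends uniquely to a unitary $\widetilde V\colon\cH_-\to\cH_+$. Finally, the identity $\|j_-y\|_-=\|j_+^*y\|_+$ shows that $j_-$ is bounded if and only if $j_+^*$ is, and the latter is equivalent to boundedness of $j_+$ (a closed operator bounded on a dense domain is everywhere defined and continuous); this gives the simultaneous continuity claim.

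For (c) and (d), I would argue purely algebraically. If $y\in\dom(A)$ then $Vy=j_+^*y\in\dom(j_+)$ and $Ay=j_+(Vy)=Vy$ as elements, so, viewing $A$ as acting from $\cH_-$ into $\cH_+$, one gets $Ay=\widetilde Vy$; hence $A\subseteq\widetilde V$. Since $\widetilde V$ is unitary and $\ran(A)$ is dense in $\cH_+$ by Theorem~\ref{t:kernel}(c), the set $\dom(A)=\widetilde V^{-1}\ran(A)$ is dense in $\cH_-$, proving (c). Inverting, for $x=Ay\in\ran(A)=\dom(H)$ we obtain $Hx=y=\widetilde V^{-1}x$, so $H\subseteq\widetilde V^{-1}=:\widetilde H$, and the density statements just established show that $H$, viewed from $\cH_+$ to $\cH_-$, is densely defined with dense range and extends uniquely to the unitary $\widetilde H=\widetilde V^{-1}$, which is (d).

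For (e), the map $x'\mapsto\langle x',\cdot\rangle_+$ is the canonical conjugate-linear Riesz isometry of $\cH_+$ onto $\cH_+^*$, so $\Theta=\langle\widetilde V(\cdot),\cdot\rangle_+$ is the composition of the unitary $\widetilde V$ with this Riesz identification and is therefore a unitary identification of $\cH_-$ with $\cH_+^*$; in particular $\|\Theta y\|=\|\widetilde Vy\|_+=\|y\|_-$. The main obstacle throughout is the careful bookkeeping in (b): keeping track of which of the three spaces each vector is regarded in, justifying the identifications $j_+x=x$ and $\langle x,y\rangle_{\cH_0}=\langle x,Vy\rangle_+$ on the common manifold, and confirming that the domain of $V$ viewed in $\cH_-$ is genuinely $\ran(j_-)$. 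An alternative to this whole scheme would be to realise the triplet through a factorisation $H=T^*T$ and invoke Theorem~\ref{t:model} directly, but the direct route via \eqref{e:dual} and Theorem~\ref{t:kernel} is more transparent.
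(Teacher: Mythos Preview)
The paper does not actually contain a proof of this statement: Theorem~\ref{t:triplet} is merely recalled from \cite{CojGh5} (as Theorem~5.1 there), with no argument given in the present paper. So there is no proof here to compare yours against.

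That said, your argument is correct and self-contained. Part (a) is standard; in (b) you correctly extract the isometry identity $\|y\|_-=\|j_+^*y\|_+$ from \eqref{e:dual} via $\langle x,y\rangle_{\cH_0}=\langle x,j_+^*y\rangle_+$, then use density of $\ran(j_-)$ in $\cH_-$ and of $\ran(j_+^*)$ in $\cH_+$ to get the unitary extension $\widetilde V$; the simultaneous continuity via $\|j_-y\|_-=\|j_+^*y\|_+$ and the closed-graph theorem is also fine. Parts (c)--(e) then follow, as you say, by bookkeeping: the key identity $Ay=j_+(j_+^*y)=j_+^*y=\widetilde Vy$ (using that $j_+$ is the identity on its domain) gives $A\subseteq\widetilde V$, and density of $\dom(A)$ in $\cH_-$ follows from density of $\ran(A)$ in $\cH_+$ (Theorem~\ref{t:kernel}(c)) together with $\dom(A)=\widetilde V^{-1}\ran(A)$. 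Your alternative route---realising the triplet via a factorisation $H=T^*T$ and invoking Theorem~\ref{t:model}---is in fact the spirit of the original proof in \cite{CojGh5}, where Theorem~5.1 is obtained by reducing to the model; your direct argument via \eqref{e:dual} is a cleaner way to see why the abstract axioms alone already force the conclusions.
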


The fact that there is a rather general model for triplets of closely embedded 
Hilbert spaces, as in Theorem~\ref{t:model}, can be used to prove certain 
existence and uniqueness results.

\begin{theorem}[\cite{CojGh5}, Theorem 5.2]\label{t:existence2} Assume that ${\cH_0}$ and $\cH_+$ are two 
Hilbert spaces such that $\cH_+$ is closely embedded in ${\cH_0}$, with $j_+$ denoting 
this closed embedding, and such that $\ran(j_+)$ is dense in ${\cH_0}$.
 
\nr{1} One can always extend this closed embedding to the triplet 
$(\cH_+;{\cH_0};\cR(j_+^{-1*}))$ of closely embedded Hilbert spaces.

\nr{2} Let $(\cH_+;{\cH_0};\cH_-)$ be any other extension of the closed embedding 
$j_+$ to a triplet of closely embedded Hilbert spaces, let $A=j_+j_+^*$ be its 
kernel operator, and let $j_-$ denote the closed embedding of ${\cH_0}$ in $\cH_-$. 
Then, there exists a unique unitary operator 
$\Phi_-\colon \cH_-\ra\cR(j_+^*)$ such that when restricted to $\dom(j_-)$ acts as 
the identity operator. %{\bf De verificat \c si completat.}
\end{theorem}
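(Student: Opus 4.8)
The plan is to reduce both assertions to the model Theorem~\ref{t:model} together with the defining axioms of a triplet. For~(1), the idea is to exhibit $\cH_+$ as a space of the type $\cD(T)$. Since $j_+$ is closed, one-to-one, densely defined in $\cH_+$, and has range dense in $\cH_0$, its inverse $T:=j_+^{-1}$ is again closed, is densely defined in $\cH_0$ with $\dom(T)=\ran(j_+)$, is one-to-one, and has range $\ran(T)=\dom(j_+)$ dense in $\cH_+$ by axiom~(ceh1); thus $T\in\cC(\cH_0,\cH_+)$ with $\ker(T)=0$ and dense range. Then $H:=T^*T$ is positive selfadjoint in $\cH_0$ with $\ker(H)=\ker(T)=0$ (and in fact $H=(j_+j_+^*)^{-1}=A^{-1}$), so the hypotheses of Theorem~\ref{t:model} are met and it delivers the triplet $(\cD(T);\cH_0;\cR(T^*))$.

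First I would identify $\cD(T)=\cD(j_+^{-1})$ with $\cH_+$. By the renorming description recalled in Subsection~\ref{ss:dt}, $\cD(T)$ is the completion of $\ran(j_+)$ in the norm $|x|_T=\|j_+^{-1}x\|_+$, and $x\mapsto j_+^{-1}x$ is an isometry of $(\ran(j_+),|\cdot|_T)$ onto $(\dom(j_+),\|\cdot\|_+)$. As $\dom(j_+)$ is dense in $\cH_+$, this extends to a unitary $W\colon\cD(T)\ra\cH_+$ satisfying $i_T=j_+W$, so that $\cD(T)$ and $\cH_+$ agree as Hilbert spaces closely embedded in $\cH_0$. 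Since $T^*=(j_+^{-1})^*=j_+^{-1*}$, we have $\cR(T^*)=\cR(j_+^{-1*})$, and the triplet becomes $(\cH_+;\cH_0;\cR(j_+^{-1*}))$, proving~(1). The point requiring care is that this is not merely an abstract unitary equivalence but one respecting the embeddings into $\cH_0$, which is exactly what the intertwining $i_T=j_+W$ encodes.

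For~(2) the decisive observation is that axiom~(th3) fixes the norm of any minus space, on the common dense domain, by a formula intrinsic to $j_+$. Given an arbitrary triplet extension $(\cH_+;\cH_0;\cH_-)$ of $j_+$, one has $\dom(j_-)=\dom(j_+^*)$ by~\eqref{e:domeq}, and for $y\in\dom(j_-)$ the value $\|y\|_-$ is prescribed by the supremum in~\eqref{e:dual}, which involves only $j_+$, $\cH_0$, and $\cH_+$. The canonical extension $(\cH_+;\cH_0;\cR(j_+^{-1*}))$ from~(1) obeys the very same formula on the very same domain $\dom(j_+^*)$, which is dense in $\cR(j_+^{-1*})$ since that space is the completion of $\ran(T^*)=\dom(j_+^*)$. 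Hence the two minus-norms coincide on $\dom(j_-)$, and the identity map there extends to a unitary $\Phi_-\colon\cH_-\ra\cR(j_+^{-1*})$; uniqueness is automatic, since any such $\Phi_-$ is already determined on the dense set $\dom(j_-)$. I expect the main obstacle to be careful bookkeeping rather than depth: one must verify that~\eqref{e:dual} returns the identical numerical value in both triplets—so that the extension is genuinely isometric and not merely bounded—and that $\dom(j_-)$ embeds densely into each minus space, so that the identity indeed extends to a unitary.
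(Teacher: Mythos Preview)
The present paper does not contain a proof of this theorem: it is stated as a quotation from \cite{CojGh5}, Theorem~5.2, in the review Section~\ref{s:tcehs}, with no argument given here. So there is no proof in this paper to compare your attempt against.

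That said, your approach is sound and is the natural one. For~(1), taking $T=j_+^{-1}\in\cC(\cH_0,\cH_+)$ and invoking Theorem~\ref{t:model} is exactly right; the identification $\cD(j_+^{-1})\cong\cH_+$ via $x\mapsto j_+^{-1}x$ is an isometry on $\ran(j_+)$ extending to a unitary, and the intertwining $i_T=j_+W$ on that domain guarantees that the two closed embeddings into $\cH_0$ coincide, not merely that the abstract Hilbert spaces do. For~(2), the key observation that~\eqref{e:dual} pins down $\|\cdot\|_-$ on $\dom(j_-)=\dom(j_+^*)$ by data depending only on $j_+$ is correct, and since $\ran(T^*)=\ran((j_+^*)^{-1})=\dom(j_+^*)$ is by construction dense in $\cR(j_+^{-1*})$ while $\ran(j_-)=\dom(j_-)$ is dense in $\cH_-$ by~(th2), the identity on $\dom(j_-)$ extends uniquely to a unitary. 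One cosmetic point: the target space in part~(2) of the statement is printed as $\cR(j_+^*)$, whereas your construction (and part~(1)) land in $\cR(j_+^{-1*})$; the latter is the space that actually sits over $\dom(j_+^*)\subseteq\cH_0$, so your version is the internally consistent one.
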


Another consequence of the existence of the model is a certain 
"left-right" symmetry, which, in general, the classical triplets of Hilbert spaces 
do not share.

\begin{proposition}[\cite{CojGh5}, Proposition 5.3]\label{p:symmetry}
Let $(\cH_+;\cH_0;\cH_-)$ be a triplet of closely embedded Hilbert spaces. Then 
$(\cH_-;\cH_0;\cH_+)$ is also a triplet of closely embedded Hilbert spaces, more 
precisely:

\nr{1} If $j_+$ and $j_-$ denote the closed embeddings of $\cH_+$ in $\cH_0$ and, 
respectively, of $\cH_0$ in $\cH_-$, then $j_-^{-1}$ and $j_+^{-1}$ are the closed 
embeddings of $\cH_-$ in $\cH_0$ and, respectively, of $\cH_0$ in $\cH_+$.

\nr{2} If $H$ and $A$ denote the Hamiltonian, respectively, the kernel operator of the 
triplet $(\cH_+;\cH_0;\cH_-)$, then $A$ and $H$ are the Hamiltonian and, 
respectively, the kernel operator of the triplet $(\cH_-;\cH_0;\cH_+)$.
\end{proposition}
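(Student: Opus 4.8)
The plan is to check the three defining axioms (th1)--(th3) for the reversed system $(\cH_-;\cH_0;\cH_+)$, equipped with the candidate embeddings $k_+:=j_-^{-1}$ of $\cH_-$ into $\cH_0$ and $k_-:=j_+^{-1}$ of $\cH_0$ into $\cH_+$, reading off the kernel and Hamiltonian operators along the way. Axioms (th1) and (th2) should be the routine part. Since $j_-\in\cC(\cH_0,\cH_-)$ is injective with range dense in $\cH_-$ by (th2), its inverse $k_+=j_-^{-1}$ is again closed and densely defined, with domain $\ran(j_-)$ dense in $\cH_-$ and range $\dom(j_-)$ dense in $\cH_0$; this is precisely a closed embedding of $\cH_-$ in $\cH_0$ with dense range, which is (th1) for the reversed triplet. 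Symmetrically, $k_-=j_+^{-1}$ is a closed embedding of $\cH_0$ in $\cH_+$ with dense range, giving (th2). Here I would use only that the inverse of a closed, densely defined, injective operator with dense range is again an operator of the same type, together with the density statements already recorded in (th1), (th2) for the original triplet.

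The heart of the matter is (th3), and the key step I would establish first is the operator identity
\begin{equation*}
j_-^*j_-=j_+j_+^*=A,
\end{equation*}
where $A$ is the kernel operator of the original triplet. By the Remark following the definition, (th3) is equivalent to (th3)$^\prime$, so $\|j_-y\|_-=\|j_+^*y\|_+$ for all $y\in\dom(j_-)=\dom(j_+^*)$. Squaring, the two closed quadratic forms $y\mapsto\|j_-y\|_-^2$ and $y\mapsto\|j_+^*y\|_+^2$ share the common domain $\dom(j_-)=\dom(j_+^*)$ and take the same values, hence coincide; since the positive selfadjoint operator associated to the closed form of a closed operator $S$ is $S^*S$, this forces $j_-^*j_-=(j_+^*)^*j_+^*=j_+j_+^*=A$. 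This identity at once yields part (2): the kernel operator of the reversed triplet is $k_+k_+^*=j_-^{-1}(j_-^{-1})^*=j_-^{-1}(j_-^*)^{-1}=(j_-^*j_-)^{-1}=A^{-1}=H$, so $H$ is its kernel operator and consequently $A=H^{-1}$ is its Hamiltonian, exactly as asserted.

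It then remains to verify (th3)$^\prime$ for the reversed triplet, namely $\dom(k_+^*)=\dom(k_-)$ together with $\|k_-y\|_+=\|k_+^*y\|_-$ for all $y\in\dom(k_-)$. Using the standard identity $(j_-^{-1})^*=(j_-^*)^{-1}$, valid for a closed, densely defined, injective operator with dense range, I obtain $\dom(k_+^*)=\ran(j_-^*)$ and $\dom(k_-)=\ran(j_+)$, so the domain equality reduces to $\ran(j_-^*)=\ran(j_+)$. I would settle both the domain and the norm equality at once by means of the polar decompositions $j_-=U_-A^{1/2}$ and $j_+=A^{1/2}U_+$, with unitaries $U_-\colon\cH_0\to\cH_-$ and $U_+\colon\cH_+\to\cH_0$ (each underlying operator is closed, injective and of dense range, and $(j_-^*j_-)^{1/2}=(j_+j_+^*)^{1/2}=A^{1/2}$ by the key identity). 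Taking the adjoint gives $j_-^*=A^{1/2}U_-^*$, whence $\ran(j_-^*)=\ran(A^{1/2})=\ran(j_+)$ and the domain equality follows. For the norm equality, take $y=j_+x\in\ran(j_+)$ and set $z=(j_-^*)^{-1}y=k_+^*y$; then $A^{1/2}U_+x=y=A^{1/2}U_-^*z$, and since $A^{1/2}$ is injective ($0$ not being an eigenvalue of $A$, by Theorem~\ref{t:triplet}(a)) we deduce $U_+x=U_-^*z$, so that $\|k_-y\|_+=\|x\|_+=\|U_+x\|_{\cH_0}=\|U_-^*z\|_{\cH_0}=\|z\|_-=\|k_+^*y\|_-$. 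This establishes (th3)$^\prime$, hence (th3), completing the verification that $(\cH_-;\cH_0;\cH_+)$ is a triplet of closely embedded Hilbert spaces with the asserted embeddings, kernel operator $H$ and Hamiltonian $A$.

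The step I expect to be the main obstacle is the passage from the pointwise norm equality $\|j_-y\|_-=\|j_+^*y\|_+$ to the operator identity $j_-^*j_-=j_+j_+^*$: making this rigorous requires that both quadratic forms be closed on the common form domain $\dom(j_-)=\dom(j_+^*)$ and an appeal to the bijective correspondence between closed positive forms and positive selfadjoint operators. The surrounding bookkeeping also needs care, in particular the identity $(j_-^{-1})^*=(j_-^*)^{-1}$ and, throughout, the close-embedding identifications of the common dense manifold as it sits simultaneously inside $\cH_+$, $\cH_0$ and $\cH_-$, which is what makes an expression such as $\ran(j_-^*)=\ran(j_+)$ meaningful as an equality of subsets of $\cH_0$.
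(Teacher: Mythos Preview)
Your argument is correct. The present paper does not actually prove this proposition; it records it from \cite{CojGh5} and signals that there the symmetry is derived as ``another consequence of the existence of the model'', i.e.\ by passing via Theorem~\ref{t:existence2} to the canonical triplet $(\cD(T);\cH_0;\cR(T^*))$ of Theorem~\ref{t:model}, where the interchangeability of $\cD(T)$ and $\cR(T^*)$ is built into the construction.

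Your route is genuinely different: you bypass the model entirely and argue directly from the axioms. The key step is extracting the operator identity $j_-^*j_-=j_+j_+^*=A$ from (th3)$^\prime$ via the correspondence between closed positive forms and positive selfadjoint operators, and then reading off $\ran(j_-^*)=\ran(A^{1/2})=\ran(j_+)$ and the norm equality for the reversed triplet from the polar factorisations $j_-=U_-A^{1/2}$, $j_+=A^{1/2}U_+$. The model-based approach makes part~(2) essentially automatic and explains conceptually why the symmetry holds; your direct approach is more self-contained and has the merit of isolating $j_-^*j_-=j_+j_+^*$ as the operative identity, from which everything else follows by standard unbounded-operator calculus. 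The point you singled out as the main obstacle --- that equality of the closed forms $y\mapsto\|j_-y\|_-^2$ and $y\mapsto\|j_+^*y\|_+^2$ on their common domain forces $j_-^*j_-=j_+j_+^*$ --- is indeed valid, since both $j_-$ and $j_+^*$ are closed and hence both forms are closed on $\dom(j_-)=\dom(j_+^*)$.
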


\section{Generalised Triplets in the Sense of Berezanskii} \label{s:gt}

In this section we
compare the notion of triplet of closely embedded Hilbert spaces with that of generalised triplet of Hilbert spaces as defined in 
\cite{Berezanskii} at page 57, more precisely, $(\cH;\cH_0;\cH^\prime)$ is called a 
\emph{generalised triplet} of Hilbert spaces if:
\begin{itemize}
\item[(gt1)] $\cD=\cH\cap\cH_0\cap\cH^\prime$ is a linear subspace dense 
in each of the Hilbert spaces $\cH$, $\cH_0$, $\cH^\prime$.
\item[(gt2)] The sesquilinear form $b(\phi,\psi)=\langle \phi,\psi\rangle_{\cH_0}$, 
$\phi,\psi\in \cD$, has the property
\begin{equation*} |b(\phi,u)|\leq \|\phi\|_{\cH^\prime}\|u\|_{\cH},\quad 
\phi,u\in\cD,
\end{equation*} and hence it can be uniquely extended to a continuous 
sesquilinear form $\cH^\prime\times\cH\ni (\phi,v)\mapsto b(\phi,v)\in \CC$.
\item[(gt3)] For each $u\in \cH$ there exists a unique vector $\phi_u\in\cH^\prime$ 
such that $\langle u,v\rangle_\cH=b(\phi_u,v)$, for all 
$v\in \cH$.
\end{itemize}

Preferable is to reformulate this definition in operator theoretical terms.

\begin{lemma}[\cite{Berezanskii}, page 58]\label{l:be}
 Let $\cH$, $\cH_0$,  and $\cH^\prime$ 
be Hilbert spaces. Then $(\cH;\cH_0;\cH^\prime)$ is a generalised triplet of Hilbert 
spaces if and only if \emph{(gt1)} holds and 
there exists $B\colon\cH^\prime\ra\cH$, a contractive and 
boundedly invertible operator, such that
\begin{equation}\label{e:beta}
\langle\phi,u\rangle_{\cH_0}=\langle B\phi,u\rangle_{\cH},\quad \phi,u\in\cD.
\end{equation} In addition, the operator $B$ is uniquely determined, subject to 
these properties.
\end{lemma}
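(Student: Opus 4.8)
The plan is to prove the equivalence by constructing the operator $B$ directly from the continuous sesquilinear form guaranteed by axioms (gt2) and (gt3). First I would assume that $(\cH;\cH_0;\cH^\prime)$ is a generalised triplet and show how to produce $B$. By (gt2), the form $b$ extends continuously to $\cH^\prime\times\cH$, and by the Riesz representation theorem applied to the bounded conjugate-linear functional $\phi\mapsto \ol{b(\phi,u)}$ on $\cH^\prime$ (for each fixed $u\in\cH$), there is a bounded operator $B\colon\cH^\prime\ra\cH$ satisfying $b(\phi,u)=\langle B\phi,u\rangle_\cH$ for all $\phi\in\cH^\prime$, $u\in\cH$. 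Restricting to $\phi,u\in\cD$ yields precisely \eqref{e:beta}. The bound in (gt2) gives $\|B\phi\|_\cH\leq\|\phi\|_{\cH^\prime}$, so $B$ is contractive.

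The key remaining step is to extract \emph{bounded invertibility} of $B$ from axiom (gt3). For each $u\in\cH$, (gt3) supplies a unique $\phi_u\in\cH^\prime$ with $\langle u,v\rangle_\cH=b(\phi_u,v)=\langle B\phi_u,v\rangle_\cH$ for all $v\in\cH$; since this holds for every $v\in\cH$, we conclude $B\phi_u=u$. Thus $B$ is surjective. The uniqueness of $\phi_u$ together with the identity $B\phi_u=u$ lets me define $B^{-1}u:=\phi_u$, giving a well-defined linear right inverse. Injectivity of $B$ follows because if $B\phi=0$ then $\langle B\phi,v\rangle_\cH=0$ for all $v\in\cH$, so $b(\phi,v)=0$ for all $v$, and the uniqueness clause in (gt3) (applied with $u=0$, whose representing vector is $0$) forces $\phi=0$. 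Hence $B$ is a bijection. To see that $B^{-1}$ is bounded I would invoke the open mapping theorem (or the inverse mapping theorem), since $B$ is a bounded bijection between Banach spaces; this is where the main technical content lies, though it is standard once bijectivity is in hand.

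For the converse direction, I would assume (gt1) holds and that such a contractive, boundedly invertible $B$ exists satisfying \eqref{e:beta}, and then verify (gt2) and (gt3). Axiom (gt2) follows from the Cauchy–Schwarz estimate
\begin{equation*}
|b(\phi,u)|=|\langle B\phi,u\rangle_\cH|\leq\|B\phi\|_\cH\|u\|_\cH\leq\|\phi\|_{\cH^\prime}\|u\|_\cH,
\end{equation*}
using that $B$ is contractive; the continuous extension to $\cH^\prime\times\cH$ is then immediate by density from (gt1). For (gt3), given $u\in\cH$, bounded invertibility provides $\phi_u:=B^{-1}u\in\cH^\prime$, and \eqref{e:beta} extended by continuity gives $\langle u,v\rangle_\cH=\langle B B^{-1}u,v\rangle_\cH=b(\phi_u,v)$ for all $v\in\cH$; uniqueness of $\phi_u$ comes from injectivity of $B$.

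Finally, for the uniqueness assertion, suppose $B_1$ and $B_2$ both satisfy \eqref{e:beta}. Then $\langle B_1\phi,u\rangle_\cH=\langle B_2\phi,u\rangle_\cH$ for all $\phi,u\in\cD$, so $\langle(B_1-B_2)\phi,u\rangle_\cH=0$; since $\cD$ is dense in $\cH$ by (gt1), this forces $(B_1-B_2)\phi=0$ for every $\phi\in\cD$, and density of $\cD$ in $\cH^\prime$ together with boundedness of $B_1-B_2$ yields $B_1=B_2$. I expect the open mapping argument for bounded invertibility to be the only nonroutine point; everything else is a direct translation between the form-language of (gt2)–(gt3) and the operator identity \eqref{e:beta}, exploiting the density in (gt1) at each passage to the full spaces.
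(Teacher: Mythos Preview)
Your proof is correct and follows essentially the same route as the paper: obtain $B$ from the Riesz representation of the bounded form $b$, then use (gt3) to see that $B$ is bijective and invoke a Banach-space isomorphism theorem for bounded invertibility (you use the Open Mapping Theorem where the paper cites the Closed Graph Theorem, which amounts to the same thing). You also spell out the converse direction and the uniqueness of $B$, which the paper leaves implicit.
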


\begin{proof} Let $(\cH;\cH_0;\cH^\prime)$ be a generalised triplet of Hilbert spaces. 
By (gt2) and the Riesz's Representation Theorem for bounded sesquilinear 
forms, there exists $B\colon\cH^\prime\ra\cH$, a contractive
operator, such that
\begin{equation}\label{e:b}
b(\phi,u)=\langle B\phi,u\rangle_{\cH},\quad \phi\in\cH^\prime,\ 
u\in\cH.
\end{equation}
By (gt3) and the Closed Graph Theorem, the operator $B$ is boundedly invertible.

Conversely, if $\cH$, $\cH_0$, and $\cH^\prime$ satisfy the axiom (gt1) and, 
in addition,  
there exists $B\colon\cH^\prime\ra\cH$, a contractive and boundedly invertible
operator such that \eqref{e:b} holds, it is easy to see that (gt2) and (gt3) hold, hence  
$(\cH;\cH_0;\cH^\prime)$ is a generalised triplet of Hilbert spaces.
\end{proof}

As a consequence, it can be shown that the concept of generalised triplet of Hilbert 
spaces has a property of symmetry similar to that of the concept of triplet of closely 
embedded Hilbert spaces.

\begin{corollary}[\cite{Berezanskii}, Theorem 1.2.10]\label{c:symgth} If $(\cH;\cH_0;\cH^\prime)$ 
is a generalised triplet of Hilbert spaces, then $(\cH^\prime;\cH_0;\cH)$ is the same.
\end{corollary}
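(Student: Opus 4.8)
The plan is to reduce everything to the operator-theoretic reformulation of Lemma~\ref{l:be} and to observe that the symmetry is witnessed by passing from the representing operator $B$ to its Hilbert space adjoint $B^*$. First note that axiom (gt1) is manifestly symmetric in $\cH$ and $\cH^\prime$: the linear manifold $\cD=\cH\cap\cH_0\cap\cH^\prime=\cH^\prime\cap\cH_0\cap\cH$ is literally the same for both triplets and, by hypothesis, it is dense in each of $\cH$, $\cH_0$, and $\cH^\prime$. Hence (gt1) holds for $(\cH^\prime;\cH_0;\cH)$ as well, and by Lemma~\ref{l:be} it remains only to produce a contractive, boundedly invertible operator $C\colon\cH\ra\cH^\prime$ such that $\langle\phi,u\rangle_{\cH_0}=\langle C\phi,u\rangle_{\cH^\prime}$ for all $\phi,u\in\cD$.

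The key point is that the correct operator is $C=B^*$, the adjoint of the operator $B\colon\cH^\prime\ra\cH$ furnished by Lemma~\ref{l:be}, and \emph{not} the inverse $B^{-1}$, for which contractivity would in general fail. Indeed, using the conjugate symmetry of the inner product of $\cH_0$, the defining relation \eqref{e:beta} applied to the pair $(u,\phi)$ (both of which lie in $\cD$), and the definition of the adjoint, for all $\phi,u\in\cD$ one computes
\begin{equation*}
\langle\phi,u\rangle_{\cH_0}=\overline{\langle u,\phi\rangle_{\cH_0}}
=\overline{\langle Bu,\phi\rangle_{\cH}}=\langle\phi,Bu\rangle_{\cH}
=\langle B^*\phi,u\rangle_{\cH^\prime},
\end{equation*}
which is exactly the required identity with $C=B^*$. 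Here one uses throughout that $\phi\in\cD\subseteq\cH$ and $u\in\cD\subseteq\cH^\prime$, so that every inner product above is well defined, and that $B^*\colon\cH\ra\cH^\prime$.

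It then remains to check that $C=B^*$ enjoys the two operator-theoretic properties. Contractivity is immediate, since $\|B^*\|=\|B\|\le 1$, and $B^*$ is boundedly invertible because $(B^*)^{-1}=(B^{-1})^*$ is bounded, $B$ being boundedly invertible. With these two facts in hand, Lemma~\ref{l:be} (applied with the roles of $\cH$ and $\cH^\prime$ interchanged) yields that $(\cH^\prime;\cH_0;\cH)$ is a generalised triplet of Hilbert spaces. I expect the only genuinely delicate point to be the recognition that the representing operator for the reversed triplet is $B^*$ rather than $B^{-1}$; once the conjugate-symmetry identity above is set up, with careful bookkeeping of which of the three spaces each vector of $\cD$ is regarded in, the remaining verification is routine.
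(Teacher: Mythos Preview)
Your proof is correct and follows essentially the same approach as the paper: invoke Lemma~\ref{l:be} to obtain $B$, set $C=B^*$, verify that $C$ is contractive and boundedly invertible, and check the required identity on $\cD$. The paper records the identity in the equivalent form $\langle\phi,u\rangle_{\cH_0}=\langle\phi,Cu\rangle_{\cH^\prime}$, but this differs from yours only by a relabelling via conjugate symmetry.
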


\begin{proof} Assuming that $(\cH;\cH_0;\cH^\prime)$ is a generalised triplet of Hilbert 
spaces, by Lemma~\ref{l:be}, 
let $B\colon\cH^\prime\rightarrow\cH$ be the contractive and boundedly 
invertible linear operator such that \eqref{e:beta} holds. Then 
$C=B^*\colon\cH\ra\cH^\prime$ is a contractive and boundedly invertible linear
operator such that
\begin{equation*}\langle \phi,u\rangle_{\cH_0}=\langle\phi,Cu\rangle_{\cH^\prime},
\quad \phi,u\in\cD,
\end{equation*} hence, again by Lemma~\ref{l:be}, $(\cH^\prime;\cH_0;\cH)$ is a
generalised triplet of Hilbert spaces.
\end{proof}

\subsection{Starting with a Triplet of Closely Embedded Hilbert Space.}
We first investigate the possibility of making a generalised triplet from a triplet 
of closely embedded Hilbert spaces.

\begin{theorem}\label{t:genclo} 
Let $(\cH_+;\cH_0;\cH_-)$ be a triplet of closely
embedded Hilbert spaces and let $\cD=\cH_+\cap\cH_0\cap\cH_-$. 
Then, $(\cH_+;\cH_0;\cH_-)$ is a generalised triplet of Hilbert spaces 
if and only if one, hence all, of the following mutually equivalent 
conditions holds:

\nr{a} The sesquilinear form $(\cD;\|\cdot\|_-)\times(\cD;\|\cdot\|_+)\ni(\phi,u)
\mapsto\langle\phi,u\rangle_{\cH_0}\in\CC$ is separately continuous.

\nr{b} The sesquilinear form $(\cD;\|\cdot\|_-)\times(\cD:\|\cdot\|_+)
\ni(\phi,u)\mapsto\langle\phi,u\rangle_{\cH_0}\in\CC$ 
is jointly continuous.

\nr{c} $|\langle\phi,u\rangle_{\cH_0}|\leq \|\phi\|_{\cH_-}\|u\|_{\cH_+}$ for all 
$\phi,u\in\cD$.
\end{theorem}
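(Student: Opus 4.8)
The plan is to separate the equivalence of (a), (b), (c) from the comparison with Berezanskii's axioms, and then to match the two under the identification $\cH=\cH_+$, $\cH^\prime=\cH_-$, for which condition (c) is literally (gt2). With this reading the implication ``generalised triplet $\Ra$ (c)'' is nothing but (gt2), so the substance lies in the reverse implication and in the internal equivalence of the three continuity statements. Throughout I would lean on the duality axiom (th3), on the kernel operator $A=j_+j_+^*$ together with the unitary $\widetilde V\colon\cH_-\ra\cH_+$ provided by Theorem~\ref{t:triplet}, and on the operator reformulation recorded in Lemma~\ref{l:be}.

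For the equivalence of (a), (b), (c) the easy arcs are (c)$\Ra$(b), since (c) is precisely joint continuity with constant $1$, and (b)$\Ra$(a), since joint continuity forces separate continuity. The informative arc is (a)$\Ra$(c), and here I would deliberately avoid the usual separate-implies-joint passage, which would require the pre-Hilbert spaces $(\cD;\|\cdot\|_+)$ and $(\cD;\|\cdot\|_-)$ to be complete or barrelled, and instead read the bound off (th3). Since $\cD=\cH_+\cap\cH_0\cap\cH_-$ is contained in $\ran(j_+)$ and in $\dom(j_-)=\dom(j_+^*)$, each $u\in\cD$ is $u=j_+x_u$ for a unique $x_u\in\dom(j_+)$ with $\|u\|_{\cH_+}=\|x_u\|_+$, while each $\phi\in\cD$ lies in $\dom(j_-)$; substituting $x=x_u$ into the supremum of (th3) that computes $\|\phi\|_-$ gives $|\langle\phi,u\rangle_{\cH_0}|=|\langle x_u,\phi\rangle_{\cH_0}|\le\|\phi\|_{\cH_-}\|u\|_{\cH_+}$, which is (c). The point I would stress is that the constant is exactly $1$, a fact supplied by the duality rather than by the continuity hypothesis alone.

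For ``(c)$\Ra$ generalised triplet'' I must verify (gt1), (gt2), (gt3). Axiom (gt2) is (c). For (gt3) I would take $B=\widetilde V$ from Theorem~\ref{t:triplet}(b): it is unitary, hence contractive and boundedly invertible, and since $\widetilde V$ restricts to $j_+^*$ on $\dom(j_+^*)=\dom(j_-)$, for $\phi,u\in\cD$ one computes
\[
\langle\widetilde V\phi,u\rangle_+=\langle j_+^*\phi,x_u\rangle_+=\langle\phi,j_+x_u\rangle_{\cH_0}=\langle\phi,u\rangle_{\cH_0},
\]
so Lemma~\ref{l:be} applies with this $B$ and delivers (gt3) at once. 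The remaining and genuinely delicate axiom is (gt1): that $\cD$ be dense simultaneously in $\cH_+$, $\cH_0$, and $\cH_-$.

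The main obstacle I anticipate is exactly this triple density. Because $j_+$ and $j_-$ are closed but not continuous, the three norms on $\cD$ are mutually non-comparable and no soft argument is available; $\dom(j_+)$ and $\dom(j_+^*)$ are each dense in $\cH_0$, yet density of their intersection is not automatic. I would instead produce an explicit common core. Passing to the model of Theorem~\ref{t:model}, with $H=A^{-1}=T^*T$, one has $\cD=\dom(T)\cap\ran(T^*)$, and the subspace $T^*\bigl(\dom(TT^*)\bigr)$ sits inside $\cD$. Transporting through the isometric extension $\widehat T$ and through $T^*$, density of $T^*(\dom(TT^*))$ in $\cD(T)$, in $\cH$, and in $\cR(T^*)$ reduces respectively to density of $\ran(TT^*)$ in $\cG$, to density of $\ran(T^*)$ in $\cH$ (with $\dom(TT^*)$ a core for $T^*$), and to density of $\dom(TT^*)$ in $\cG$, all of which follow from selfadjointness of $TT^*$ together with $\ker(T^*)=0$ and $\ker(T)=0$. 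Once this common core is in hand, (gt1) holds and, combined with the two previous paragraphs, the equivalence with Berezanskii's notion is complete.
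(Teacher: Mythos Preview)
Your argument for (a)$\Rightarrow$(c) never actually uses (a): you derive the bound directly from the duality formula in (th3). Were that legitimate, (c) would hold for \emph{every} triplet of closely embedded Hilbert spaces, so the theorem would be asserting that every such triplet is automatically a generalised triplet, and the stated equivalence would be vacuous. The failure is in the claim ``$\cD=\cH_+\cap\cH_0\cap\cH_-$ is contained in $\ran(j_+)$ and in $\dom(j_-)$''. The closed embedding $j_+$ has domain some dense linear manifold of $\cH_+$ contained in $\cH_+\cap\cH_0$, but nothing forces $\dom(j_+)=\cH_+\cap\cH_0$: closedness only tells you that if $u_n\to u$ in $\cH_+$ \emph{and} $u_n\to u$ in $\cH_0$ then $u\in\dom(j_+)$, not that every element of the set-theoretic intersection lies there. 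Likewise for $\dom(j_-)$. Consequently (th3) only delivers $|\langle x,\phi\rangle_{\cH_0}|\le\|\phi\|_-\|x\|_+$ for $x\in\dom(j_+)$ and $\phi\in\dom(j_-)$, not for arbitrary $\phi,u\in\cD$.

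The paper closes exactly this gap with condition (a). It first establishes, via the identity $\langle\phi,u\rangle_{\cH_0}=\langle j_+^*\phi,u\rangle_{\cH_+}=\langle\widetilde V\phi,u\rangle_{\cH_+}$, the formula $\langle\phi,u\rangle_{\cH_0}=\langle B\phi,u\rangle_{\cH_+}$ on $\dom(j_-)\times\dom(j_+)$ only. It then shows $\dom(j_+^*j_+)=\dom(j_+)\cap\dom(j_-)$ is contained in $\cD$ and dense in each of $\cH_+$, $\cH_0$, $\cH_-$ (the density in $\cH_-$ via the symmetry of Proposition~\ref{p:symmetry}, rather than by passing to the $(\cD(T);\cH;\cR(T^*))$ model as you do). Separate continuity (a) is what permits approximating a general $u\in\cD$ by $u_n\in\dom(j_+^*j_+)$ in $\|\cdot\|_+$, and then a general $\phi\in\cD$ by $\phi_n\in\dom(j_+^*j_+)$ in $\|\cdot\|_-$, to extend the formula to all of $\cD\times\cD$; from there (c) follows because $B=\widetilde V$ is unitary. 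The same issue recurs in your verification of \eqref{e:beta} for Lemma~\ref{l:be}: the computation $\langle\widetilde V\phi,u\rangle_+=\langle j_+^*\phi,x_u\rangle_+=\langle\phi,j_+x_u\rangle_{\cH_0}$ again presupposes $\phi\in\dom(j_+^*)$ and $u\in\dom(j_+)$, so it too needs the approximation step to reach all of $\cD$.
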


\begin{proof} Let $(\cH_+;\cH_0;\cH_-)$ be a triplet of closely
embedded Hilbert spaces and let $\cD=\cH_+\cap\cH_0\cap\cH_-$.
In order to prove that the axiom (gt1) holds, we first 
prove that $\cD$ is dense in each of $\cH_+$ and $\cH_0$.
To see this, we first observe that
\begin{align*}
\dom(j_+^*j_+) & = \{u\in\dom(j_+)\mid j_+u\in \dom(j_+^*)\}\\
& = \dom(j_+)\cap \dom(j_+^*),
\mbox{ since $j_+u=u$ for all $u\in\dom(j_+)$}\\
& = \dom(j_+)\cap \dom(j_-), 
\mbox{ since $\dom(j_+^*)=\dom(j_-)$, see \eqref{e:domeq},}\\
& = \ran(j_+)\cap\ran(j_-),
\mbox{ since $j_+$ and $j_-$ are identity operators on their domains.}
\end{align*}
Thus, $\dom(j_+^*j_+)$ is a subspace of each of the spaces $\cH_+$, 
$\cH_0$, and $\cH_-$ hence, in particular, it is a subspace of $\cD$. 
On the other hand, since $\dom(j_+^*j_+)$ is a core for $j_+$, for any 
$u\in\dom(j_+)$ there exists a sequence $(u_n)_n$ of vectors in 
$\dom(j_+^*j_+)$ 
such that $\|u_n-u\|_{\cH_+}\ra 0$ and $\|u_n-u\|_{\cH_0}\ra 0$ 
as $n\ra\infty$, 
hence, since $\dom(j_+)$ is dense in $\cH_+$ and $\ran(j_+)$ is dense in 
$\cH_0$, 
it follows that $\dom(j_+^*j_+)$ in dense in each of $\cH_+$ and $\cH_0$. In 
particular, $\cD$ is dense in each of $\cH_+$ and $\cH_0$.

In order to finish proving that the axiom (gt1) holds, it remains to prove that 
$\cD$ is dense in $\cH_-$ as well. To this end, by the symmetry property as in 
Proposition~\ref{p:symmetry}, $(\cH_-;\cH_0;\cH_+)$ is a triplet of 
closely embedded
Hilbert spaces as well and hence, using the fact just proven that $\cD$, whose 
definition does not depend on the order in which we consider the spaces, 
is dense in the 
leftmost component of the triplet, it follows that $\cD$ is dense in $\cH_-$ as well.

In order to complete the proof, by Lemma~\ref{l:be} it is sufficient to prove that 
there exists a contractive and boundedly invertible operator 
$B\colon\cH_-\ra\cH_+$ 
such that \eqref{e:beta} holds. Indeed, for arbitrary $\phi\in\dom(j_+^*)=\dom(j_-)$ 
and $u\in\dom(j_+)$ we have
\begin{equation}\label{e:lap} \langle \phi,u\rangle_{\cH_0}  = \langle \phi,j_+u\rangle_{\cH_0}
=\langle j_+^* \phi,u\rangle_{\cH_+}  = \langle V\phi,u\rangle_{\cH_+},
\end{equation} 
where $V=j_+^*$ but considered as an operator defined in $\cH_-$ and 
valued in $\cH_+$. By Theorem~\ref{t:triplet}, there 
exists a unique unitary operator $\widetilde V\colon \cH_-\ra\cH_+$ that 
extends the operator $V$. Letting $B=\widetilde V$, clearly, since $B$ is 
unitary it is contractive and boundedly invertible.
In particular, \eqref{e:lap} can be rewritten as
\begin{equation}\label{e:lup} 
\langle \phi,u\rangle_{\cH_0}=\langle B\phi,u\rangle_{\cH_+},\quad \phi\in
\dom(j_-),\ u\in\dom(j_+).
\end{equation} 

We assume now that the condition (a) holds and prove that 
\eqref{e:lup} holds for all $\phi,u\in\cD$, that is, \eqref{e:beta}. To this end, fix 
$\phi\in\dom(j_-)$ for the moment and consider an arbitrary vector $u\in\cD$. 
As proven before, $\dom(j_+^*j_+)$ lies in $\cD$ and is dense with respect
to the norm $\|\cdot\|_{\cH_+}$ hence, there exists a sequence $(u_n)$ 
of vectors in $\dom(j^*_+j_+)$ such that $\|u_n-u\|_{\cH_+}\ra 0$ as 
$n\ra\infty$. 
By condition (a), $\langle \phi,u_n\rangle_{\cH_0}\ra \langle \phi,u
\rangle_{\cH_0}$ as $n\ra\infty$, and by \eqref{e:lup} we have
\begin{equation*} \langle \phi,u_n\rangle_{\cH_0}=\langle B\phi,u_n
\rangle_{\cH_+},\quad n\in\NN,
\end{equation*} hence, we can pass to the limit as $n\ra\infty$ to 
obtain that 
\eqref{e:lup} holds for all $u\in\cD$ and all $\phi\in\dom(j_-)$. Next, a similar
reasoning, with fixing $u\in\cD$ and approximating $\phi\in\cD$ accordingly, 
shows that \eqref{e:lup} holds for all $u,\phi\in\cD$. 

It is clear that (c)$\Ra$(b)$\Ra$(a). Observe that, we have just proven before 
that (a)$\Ra$(c), hence the conditions (a), (b), and (c) are mutually 
equivalent.

The converse implication is clear.
\end{proof}

\begin{example} \emph{Weighted $L^2$ Spaces.} \label{ex:wledoi}
Let $(X;\fA)$ be a measurable space on which we consider a $\sigma$-finite
measure $\mu$.  A function 
$\omega$ defined on $X$ is called a \emph{weight} with respect to the 
measure space 
$(X;\fA;\mu)$ if it is measurable and $0<\omega(x)<\infty$, for $\mu$-almost all 
$x\in X$. Note that $\cW(X;\mu)$, the collection of weights with respect to 
$(X;\fA;\mu)$, is a multiplicative unital group. For an arbitrary
$\omega\in \cW(X;\mu)$, consider the measure $\nu$ whose Radon-Nikodym 
derivative with respect to $\mu$ is $\omega$, denoted $\de\nu=\omega\de\mu$,
that is, for any $E\in\fA$ we have
$\nu(E)=\int_E \omega\de\mu$.
It is easy to seee, e.g.\ see \cite{CojGh4}, that
 $\nu$ is always $\sigma$-finite. 
 
 In \cite{CojGh4}, Theorem~2.1, it is proven that
$(L_w^2(X;\mu);L^2(X;\mu);L^2_{w^{-1}}(X;\mu))$ is a triplet of 
closely embedded Hilbert spaces, provided that $w$ is a weight 
on the $\sigma$-finite measure space $(X;\mathfrak{A};\mu)$. More precisely,
the closed embeddings $j_\pm$ of $L_w^2(X;\mu)$ in $L^2(X;\mu)$ and
of $L^2(X;\mu)$ in 
$L^2_{w^{-1}}(X;\mu)$ have maximal domains $L^2_\omega(X;\mu)\cap 
L^2(X;\mu)$ and, respectively, $L^2(X;\mu)\cap L^2_{\omega^{-1}}(X;\mu)$. 
It is a routine exercise to check that, if $\phi,u\in L^2(X;\mu)\cap L^2_w(X;\mu)\cap L^2_{w^{-1}}(X,\mu)$ then
\begin{equation*}|\langle\phi,u\rangle_{L^2(X;\mu)}|\leq \|\phi
\|_{L^2_w(X;\mu)} \|u\|_{L^2_{w^{-1}}(X;\mu)},
\end{equation*} hence, $(L_w^2(X;\mu);L^2(X;\mu);L^2_{w^{-1}}(X;\mu))$
is a generalised triplet of Hilbert spaces as well, by Theorem~\ref{t:genclo}.
Observe that, in the proof of Theorem~2.1 in \cite{CojGh4}, it was directly
proven that $L_w^2(X;\mu)\cap L^2(X;\mu)\cap L^2_{w^{-1}}(X;\mu)$ 
is dense in each of the spaces $L_w^2(X;\mu)$, $L^2(X;\mu)$, and
$L^2_{w^{-1}}(X;\mu)$.
\end{example}

\begin{example} \emph{Dirichlet Type Spaces.} \label{ex:dirichlet}
For a fixed natural number $N$ consider
the unit polydisc $\DD^N=\DD\times \cdots\times\DD$, the direct product of $N$ 
copies 
of the unit disc $\DD=\{z\in\CC\mid |z|<1\}$. We consider $H(\DD^N)$  the algebra 
of functions holomorphic in the 
polydisc, that is, the collection of all functions $f\colon \DD^N\ra \CC$ that are 
holomorphic in each variable, equivalently, there exists $(a_k)_{k\in\ZZ_+^N}$ 
with the property that
\begin{equation}\label{e:rep} f(z)=\sum_{k\in\ZZ_+^N} a_kz^k,\quad z\in\DD^N,
\end{equation} where the series converges absolutely and uniformly on any 
compact subset in $\DD^N$. Here and in the sequel, for any multi-index 
$k=(k_1,\ldots,k_N)\in\ZZ_+^N$ and any $z=(z_1,\ldots,z_N)\in \CC^N$ we let 
$z^k=z_1^{k_1}\cdots z_N^{k_N}$.  

Let $\alpha\in\RR^N$ be fixed.
The \emph{Dirichlet type space} $\cD_\alpha$, see \cite{Taylor} and \cite{JupiterRedett}, 
is defined as the space of all functions $f\in H(\DD^N)$ with representation \eqref{e:rep} 
subject to the condition
\begin{equation*}\label{e:cond} \sum_{k\in\ZZ_+^N} (k+1)^\alpha |a_k|^2<\infty,
\end{equation*}
where, $(k+1)^\alpha=(k_1+1)^{\alpha_1}
\cdots(k_N+1)^{\alpha_N}$.
The linear space 
$\cD_\alpha$ is naturally organized as a Hilbert space with inner product 
$\langle\cdot,\cdot\rangle_\alpha$
\begin{equation*}\label{e:ip} \langle f,g\rangle_\alpha
=\sum_{k\in\ZZ_+^N} (k+1)^\alpha 
a_k \overline{b_k},
\end{equation*} where $f$ has representation \eqref{e:rep} and similarly
$g(z)=\sum_{k\in\ZZ_+^N} b_k z^k$, for all $z\in\DD^N$, 
and norm $\|\cdot\|_\alpha$ defined by
\begin{equation*}\label{e:norm} \|f\|^2_\alpha=
\sum_{k\in\ZZ_+^N} (k+1)^\alpha 
|a_k|^2.\end{equation*}

It is proven in \cite{CojGh4},
Theorem~3.1, that, letting $\alpha,\beta\in\RR^N$ be arbitrary multi-indices, 
then
$(\cD_\beta;\cD_\alpha;\cD_{2\alpha-\beta})$ is a triplet of closely embedded 
Hilbert spaces. It is a simple exercise to check that
\begin{equation*} |\langle f,g\rangle_\alpha|\leq \|f\|_{2\alpha-\beta} 
\|g\|_\beta,
\end{equation*} whenever 
$f,g\in\cD_\beta\cap\cD_\alpha\cap\cD_{2\alpha-\beta}$ 
hence, by Theorem~\ref{t:genclo}
$(\cD_\beta;\cD_\alpha;\cD_{2\alpha-\beta})$
is a generalised triplet of Hilbert spaces as well. Note that, 
in this particular case, 
$\cD_\beta\cap\cD_\alpha\cap\cD_{2\alpha-\beta}$ contains 
$\cP_N$, the linear space of polynomial functions in $N$ complex variables, 
that is dense in each of the Dirichlet type spaces
$\cD_\beta$, $\cD_\alpha$, and $\cD_{2\alpha-\beta}$.
\end{example}

\subsection{Starting with a Generalised Triplet.}
We now consider a generalised triplet of Hilbert spaces $(\cH;\cH_0;\cH^
\prime)$. First, we investigate the possibility of making a triplet of closely 
embedded Hilbert spaces out of it, in a natural way, and in such a way 
that it "essentially" coincides with it. 
Let $\cD=\cH\cap\cH_0\cap\cH^\prime$ be the linear subspace 
that is dense in each of $\cH$, $\cH_0$, and $\cH^\prime$ and, in view of 
Lemma~\ref{l:be}, consider the contractive linear operator 
$B\colon \cH^\prime\ra\cH$ that is boundedly 
invertible and such that \eqref{e:beta} holds. 

Let $j_{+,0}$ be the linear operator with domain $\dom(j_{+,0})=\cD$, 
considered as a 
subspace of $\cH$, as the embedding in $\cH_0$, that is, $j_{+,0}u=u$ for all 
$u\in\cD$. We observe that, for any $u,\phi\in\cD$, we have
\begin{equation}\label{e:jestar}
\langle \phi,j_{+,0}u\rangle_{\cH_0}=\langle \phi,u\rangle_{\cH_0}=
\langle B\phi,u\rangle_\cH,
\end{equation} hence $\cD\subseteq \dom(j_{+,0}^*)$ and $B|\cD=j_{+,0}^*|\cD$.
In particular, $j_{+,0}^*$ is defined on a subspace dense in $\cH_0$, hence 
$j_{+,0}$ is closable. Let $T\in\cC(\cH_0,\cH)$ 
be the closure of the operator $j_{+,0}$. Thus, $\cD\subseteq \dom(T)$,
$Tu=u$ for all $u\in\cD$, and $j_{+,0}^*=T^*$, in particular, $T^*|\cD=B|\cD$. 
Therefore,
\begin{equation*} \ker(T)=\cH\ominus\overline{\ran(T^*)}\subseteq \cH\ominus 
\overline{B\cD}=\cH\ominus \cH=0,
\end{equation*}
where we have taken into account that $B$ is boundedly invertible and $\cD$ is 
dense in $\cH^\prime$. This shows that $T$ is one-to-one. 
Since $T\cD\supseteq j_{+,0}\cD=\cD$, which is dense in $\cH_0$ as well, it
follows that $T$ has dense range in $\cH_0$. 

We can now consider the triplet of closely embedded Hilbert spaces 
$(\cR(T);\cH_0;\cD(T^*))$, where $j_T$ is the closed embedding of 
$\cR(T)$ in $\cH_0$ and $i_{T^*}^{-1}$ is the closed embedding of 
$\cH_0$ in 
$\cD(T^*)$, see the definitions and properties in Section~\ref{s:tcehs}. 
In the following we show that the subspace $\cD$ is 
densely contained in $\cR(T)$, that on $\cD$ 
the inner product of $\cH$ coincides with that of $\cR(T)$,
and that on $\cD$ the topological structure of $\cH^\prime$ 
coincides with that of $\cD(T^*)$.

Since $\cD$ is a subspace of $\dom(T)$ and $T$ acts on $\cD$ like the 
identity operator, it follows that $\cD$ is a subspace of $\ran(T)$, hence a 
subspace of $\cR(T)$. In addition, taking into account that $T$ is the closure of the 
embedding operator $j_{+,0}$, for any vector $x\in\dom(T)$ there exists a sequence 
$(x_n)_n$ of vectors in $\cD$ such that $\|x-x_n\|_\cH\ra 0$ and 
$\|j_{+,0}x_n-Tx\|_{\cH_0}\ra 0$, as $n\ra\infty$. Hence
\begin{equation*} \|x_n-Tx\|_T=\|x_n-x\|_\cH\ra 0,\mbox{ as }n\ra \infty,
\end{equation*} which shows that $\cD$ is dense in $\ran(T)$ with respect to the 
norm $\|\cdot\|_T$, see \eqref{e:uvete}. Since $\ran(T)$ is dense in $\cR(T)$ with 
respect to the norm $\|\cdot\|_T$, it follows that $\cD$ is dense in $\cR(T)$.
On the other hand, for any vector $x\in\cD$ we have
\begin{equation*} \|x\|_T=\|Tx\|_T=\|x\|_\cH,
\end{equation*} that is, on $\cD$ the norms of the two Hilbert spaces $\cH$ and 
$\cR(T)$ coincide.

On the other hand, as a consequence of \eqref{e:jestar} and taking into account 
that $T^*=j_{+,0}^*$, it follows that $\cD\subseteq\dom(T^*)$ and $T^*|\cD=B|\cD$. 
Thus, $\cD$ is a subspace of $\cD(T^*)$ and
\begin{equation} |x|_{T^*}=\|T^*x\|_\cH=\|Bx\|_\cH,\quad x\in\cD.
\end{equation} Taking into account that $B\colon\cH^\prime\ra\cH$ is bounded 
and boundedly invertible, this implies that on $\cD$ the norms $|\cdot|_{T^*}$ and 
$\|\cdot\|_{\cH^\prime}$ are equivalent.
% ATENTIE Trebuie demonstrat si ca $\cD$ este dens in $\cD(T^*)$
%

We have proven the following
\begin{theorem}\label{t:gth} Let $(\cH;\cH_0;\cH^\prime)$ be a generalised triplet
of Hilbert spaces, let $\cD=\cH\cap\cH_0\cap\cH^\prime$ be the linear subspace 
which is dense in each of $\cH$, $\cH_0$, and $\cH^\prime$, 
and let $B\colon\cH^\prime\ra\cH$ denote the contractive linear 
operator that is boundedly invertible and such that \eqref{e:beta} holds.

\nr{1} Let $j_{+,0}$ denote the linear operator with domain $\dom(j_{+,0})=\cD$, 
considered as a subspace of $\cH$, and with codomain in $\cH_0$, defined by 
$j_{+,0}u=u$ for all $u\in\cD$. Then $j_{+,0}$ is closable.

\nr{2} Let $T$ denote the closure of $j_{+,0}$. Then $T\in\cC(\cH,\cH_0)$, 
is one-to-one, has dense range, and $T^*|\cD=B|\cD$.

\nr{3} The triplet of closely embedded Hilbert spaces 
$(\cR(T);\cH_0;\cD(T^*))$, where $j_T$ is the closed embedding of 
$\cR(T)$ in $\cH_0$ and $i_{T^*}^{-1}$ is the closed embedding of $\cH_0$ in 
$\cD(T^*)$, has the following properties: 
\begin{itemize}
\item[(i)] $\cD$ is 
densely contained in $\cR(T)$ and on $\cD$ 
the inner product of $\cH$ coincides with that of $\cR(T)$;
\item[(ii)] $\cD$ is a subspace of $\cD(T^*)$
and on $\cD$ the norms $\|\cdot\|_{\cH^\prime}$ and $|\cdot|_{T^*}$ are equivalent.
\end{itemize}
\end{theorem}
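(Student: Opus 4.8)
The plan is to extract everything from the single identity \eqref{e:beta} together with the properties of $B$. \emph{Step 1 (closability).} For $\phi,u\in\cD$ the computation $\langle\phi,j_{+,0}u\rangle_{\cH_0}=\langle\phi,u\rangle_{\cH_0}=\langle B\phi,u\rangle_{\cH}$ recorded in \eqref{e:jestar} says precisely that $\cD\subseteq\dom(j_{+,0}^*)$ and $j_{+,0}^*|\cD=B|\cD$. Since $\cD$ is dense in $\cH_0$ by \emph{(gt1)}, the adjoint $j_{+,0}^*$ is densely defined, so $j_{+,0}$ is closable, which is (1). Writing $T$ for its closure gives $T\in\cC(\cH,\cH_0)$ and, because $(\overline{j_{+,0}})^*=j_{+,0}^*$, also $T^*|\cD=B|\cD$. \emph{Step 2 (properties of $T$).} Here $\ran(T^*)\supseteq B\cD$, and since $B$ is boundedly invertible and $\cD$ is dense in $\cH^\prime$, the image $B\cD$ is dense in $\cH$; hence $\ker(T)=\cH\ominus\overline{\ran(T^*)}=0$, so $T$ is one-to-one. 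Dense range is immediate from $T\cD=\cD$ being dense in $\cH_0$. This proves (2).

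\emph{Step 3 (the triplet).} The first task of (3) is to see that $(\cR(T);\cH_0;\cD(T^*))$ really is a triplet of closely embedded Hilbert spaces. By Step 2 the adjoint $S:=T^*\in\cC(\cH_0,\cH)$ is one-to-one (as $\ker(T^*)=\cH_0\ominus\overline{\ran(T)}=0$) with dense range. I would therefore apply the model Theorem~\ref{t:model} to $S$, with central space $\cH_0$ and Hamiltonian $S^*S=TT^*$ (positive selfadjoint in $\cH_0$, trivial kernel), obtaining that $(\cD(S);\cH_0;\cR(S^*))=(\cD(T^*);\cH_0;\cR(T))$ is a triplet of closely embedded Hilbert spaces, with $i_{T^*}$ embedding $\cD(T^*)$ in $\cH_0$ and $j_T^{-1}$ embedding $\cH_0$ in $\cR(T)$. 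Applying the left-right symmetry of Proposition~\ref{p:symmetry} then reverses the components, yielding the triplet $(\cR(T);\cH_0;\cD(T^*))$ with closed embeddings $j_T$ and $i_{T^*}^{-1}$, exactly as stated.

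\emph{Step 4 (coincidences).} For (i), $T$ acts as the identity on $\cD$, so $\cD\subseteq\ran(T)$, and the definition \eqref{e:uvete} of the $\cR(T)$-norm gives $\|x\|_T=\|Tx\|_T=\|x\|_{\cH}$ for $x\in\cD$, whence the inner products of $\cH$ and $\cR(T)$ agree on $\cD$ by polarisation. Density follows from a core argument: since $T$ is the closure of $j_{+,0}$ and $\cD$ is dense in $\cH$, each $Tx\in\ran(T)$ is approximated by $x_n\in\cD$ with $\|x_n-Tx\|_T=\|x_n-x\|_{\cH}\to0$, and as $\ran(T)$ is dense in $\cR(T)$, so is $\cD$. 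For (ii), \eqref{e:jestar} gives $\cD\subseteq\dom(T^*)$ with $T^*|\cD=B|\cD$; since $\ker(T^*)=0$ we have $\cD\subseteq\dom(T^*)=\dom(T^*)\ominus\ker(T^*)\subseteq\cD(T^*)$, and then $|x|_{T^*}=\|T^*x\|_{\cH}=\|Bx\|_{\cH}$ for $x\in\cD$. The two-sided bounds furnished by $B$ being bounded and boundedly invertible give the equivalence of $|\cdot|_{T^*}$ and $\|\cdot\|_{\cH^\prime}$ on $\cD$.

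The step I expect to demand the most care is the recognition in Step 3 that $(\cR(T);\cH_0;\cD(T^*))$ is a triplet of closely embedded Hilbert spaces: one must feed the model theorem the \emph{adjoint} $T^*$ (not $T$) so that $\cH_0$ occupies the central slot, and then use Proposition~\ref{p:symmetry} to order the components correctly, checking that the canonical embeddings it supplies are precisely $j_T$ and $i_{T^*}^{-1}$. A secondary subtlety is that the relation $T^*|\cD=B|\cD$ concerns the adjoint of the \emph{closure} $T$, not of $j_{+,0}$; this identification is legitimate only because closability was secured first, so that $(\overline{j_{+,0}})^*=j_{+,0}^*$.
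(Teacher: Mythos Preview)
Your proof is correct and follows essentially the same route as the paper's argument preceding the theorem statement: closability via \eqref{e:jestar}, injectivity of $T$ from density of $B\cD$, and the norm identifications on $\cD$ are all handled identically. Your Step~3 is in fact more explicit than the paper, which simply asserts that $(\cR(T);\cH_0;\cD(T^*))$ is a triplet of closely embedded Hilbert spaces with a reference to Section~\ref{s:tcehs}; your justification via Theorem~\ref{t:model} applied to $S=T^*$ followed by Proposition~\ref{p:symmetry} is the right way to unpack that assertion.
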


The triplet of closely embedded Hilbert spaces $(\cR(T);\cH_0;\cD(T^*))$
constructed out of the generalised triplet of Hilbert spaces 
$(\cH;\cH_0;\cH^\prime)$ as in Theorem~\ref{t:gth}, 
"essentially" coincides with the triplet of generalised Hilbert spaces 
$(\cH;\cH_0;\cH^\prime)$ on the linear manifold $\cD$, 
that is dense in each of the spaces $\cH$, $\cH_0$, and $\cH^\prime$, 
modulo a norm equivalent with $\|\cdot\|_{\cH^\prime}$. 
If we want the generalised triplet $(\cH;\cH_0;\cH^\prime)$ be a triplet 
of closely embedded Hilbert spaces itself, this depends on a rather general 
question of when a closed embedding can be obtained from an unbounded 
embedding by taking its closure. We record this fact in the following

\begin{remark}\label{r:ce} Let $\cH$ and $\cG$ be two Hilbert spaces such 
that there exists $\cD_0$ a linear manifold of both $\cH$ and $\cG$ 
that is dense in $\cG$ and let the embedding operator 
$j_0\colon\cD_0\ra \cH$ be
defined by $j_0u=u$ for all $u\in\cD_0$.
Then,  the following assertions are equivalent:
\begin{itemize}
\item[(a)] $j_0$ is closable, as an operator defined in $\cG$ and valued in 
$\cH$, 
and the closure $j=\overline{j_0}$ is a closed embedding of $\cG$ in $\cH$, 
in the sense of the definition as in Subsection~\ref{ss:dt}. 
\item[(b)] For every sequence 
$(u_n)$ of vectors in $\cD_0$ that is Cauchy with respect to both norms 
$\|\cdot\|_\cH$ and $\|\cdot\|_\cG$, there exists $u\in\cH\cap\cG$ 
(of course, unique) such that
$\|u_n-u\|_\cH\ra 0$ and $\|u_n-u\|_\cG\ra 0$ as $n\ra\infty$.
\end{itemize}
\end{remark}

We can now approach the main question of this section referring to 
characterisations of those generalised triplets of Hilbert spaces that are also 
triplets of closely embedded Hilbert spaces.

\begin{theorem}\label{t:gtceh}
Let $(\cH;\cH_0;\cH^\prime)$ be a generalised triplet
of Hilbert spaces, let $\cD=\cH\cap\cH_0\cap\cH^\prime$ be
the linear subspace which is dense in each of $\cH$, $\cH_0$, and 
$\cH^\prime$, 
and let $B\colon\cH^\prime\ra\cH$ denote the contractive linear 
operator that is boundedly invertible and such that \eqref{e:beta} holds.
Then, $(\cH;\cH_0;\cH^\prime)$ is a triplet of closely embedded Hilbert 
spaces, modulo a renorming of $\cH^\prime$ with an equivalent norm, 
if and only if the following three conditions hold:
\begin{itemize}
\item[(i)] For any sequence $(u_n)$ of vectors in $\cD$ that is Cauchy 
with respect to both norms $\|\cdot\|_\cH$ and $\|\cdot\|_{\cH_0}$, 
there exists $u\in\cH\cap\cH_0$ such that $\|u_n-u\|_\cH\ra0$ and 
$\|u_n-u\|_{\cH_0}\ra 0$ as $n\ra\infty$.
\item[(ii)] For any sequence $(\phi_n)$ of vectors in $\cD$ that is Cauchy 
with respect to both norms $\|\cdot\|_{\cH^\prime}$ and $\|\cdot\|_{\cH_0}$, 
there exists $\phi\in\cH^\prime\cap\cH_0$ such that 
$\|\phi_n-\phi\|_{\cH^\prime}\ra0$ and 
$\|\phi_n-\phi\|_{\cH_0}\ra 0$ as $n\ra\infty$.
\item[(iii)] For every vector $\phi\in\cH_0$ with the property that the linear 
functional $\cD\ni u\mapsto \langle y,\phi\rangle_{\cH_0}$ is bounded with respect 
to the norm $\|\cdot\|_\cH$, there exists a sequence $(\phi_n)_n$ of vectors in
in $\cD$ such that $\|\phi_n-\phi\|_{\cH_0}\ra 0$ and 
$\|\phi_n-\phi\|_{\cH^\prime}$, as $n\ra\infty$.
\end{itemize}
\end{theorem}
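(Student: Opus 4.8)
The plan is to route the whole argument through the triplet $(\cR(T);\cH_0;\cD(T^*))$ already produced in Theorem~\ref{t:gth}, and to show that each of the three conditions is precisely what upgrades the ``essential coincidence on $\cD$'' recorded there into a genuine coincidence of spaces, once $\cH^\prime$ is given the equivalent norm $|x|_{T^*}=\|Bx\|_\cH$. Throughout I write $j_+=T=\overline{j_{+,0}}\in\cC(\cH,\cH_0)$ for the closed embedding of $\cH$ in $\cH_0$ and $j_-$ for the closure of the natural embedding of $(\cD\subseteq\cH_0)$ into the renormed $\cH^\prime$, and I keep at hand from Theorem~\ref{t:gth} that $\cD$ is dense in $\cR(T)$ with $\|\cdot\|_T=\|\cdot\|_\cH$ on $\cD$, that $\cD\subseteq\dom(T^*)$, and that $T^*|\cD=B|\cD$.

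First I would dispose of the two completeness-type conditions via Remark~\ref{r:ce}. Condition (i) is exactly assertion (b) of that remark applied with finer space $\cH$, coarser space $\cH_0$, and dense manifold $\cD$; hence (i) holds if and only if the closure $T$ of $j_{+,0}$ is a genuine closed embedding of $\cH$ in $\cH_0$, i.e.\ $\dom(T)\subseteq\cH\cap\cH_0$ with $Tx=x$ there. This is axiom (th1) (density of the range being automatic, since $\ran(T)\supseteq\cD$ is dense in $\cH_0$), and it simultaneously identifies $\cR(T)$ with $\cH$ as closely embedded spaces. In the same way, condition (ii) is assertion (b) of Remark~\ref{r:ce} applied with finer space $\cH_0$, coarser space $\cH^\prime$ (renormed), and dense manifold $\cD$, so that (ii) holds if and only if $j_-$ is a closed embedding of $\cH_0$ in the renormed $\cH^\prime$, which is axiom (th2).

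It then remains to match axiom (th3) with condition (iii). Since $\cD$ is a core for $T$, the domain of $j_+^*=T^*$ is exactly the set of $\phi\in\cH_0$ for which $u\mapsto\langle u,\phi\rangle_{\cH_0}$ is $\|\cdot\|_\cH$-bounded on $\cD$, i.e.\ the vectors appearing in the hypothesis of (iii). Given such a $\phi$ and the approximating sequence $(\phi_n)\subseteq\cD$ furnished by (iii), continuity of $B$ on $\cH^\prime$ gives $T^*\phi_n=B\phi_n\to B\phi$ in $\cH$ while $\phi_n\to\phi$ in $\cH_0$; closedness of $T^*$ yields $T^*\phi=B\phi$ and $|\phi_n-\phi|_{T^*}\to0$. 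Hence $\phi\in\dom(j_-)$, which is the inclusion $\dom(j_+^*)\subseteq\dom(j_-)$ of (th3), and moreover $\cD$ is $|\cdot|_{T^*}$-dense in $\dom(T^*)$, so the renormed $\cH^\prime$ coincides with $\cD(T^*)$. With the identifications $\cH=\cR(T)$, renormed $\cH^\prime=\cD(T^*)$, and $j_-=i_{T^*}^{-1}$, the triplet under study becomes literally the model triplet $(\cR(T);\cH_0;\cD(T^*))$, which is a triplet of closely embedded Hilbert spaces by Theorem~\ref{t:model} (applied to $T^*$) together with the symmetry of Proposition~\ref{p:symmetry}; in particular the duality \eqref{e:dual} of (th3) is inherited, which finishes sufficiency. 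For necessity I would reverse each step: (th1) and (th2) feed the equivalence of Remark~\ref{r:ce} backwards to produce (i) and (ii), while the inclusion $\dom(j_+^*)\subseteq\dom(j_-)$ in (th3) produces, for every $\phi\in\dom(T^*)$, the sequence required in (iii).

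The step I expect to be the main obstacle is precisely the passage from ``coincidence on $\cD$'' to identification of the whole spaces: one must verify that $T^*\phi=B\phi$ persists for every $\phi\in\dom(T^*)$, not merely on $\cD$, and that $\cD$ is a core for $T^*$, so that $\cD(T^*)$ does not overshoot the renormed $\cH^\prime$. This is exactly the content supplied by (iii), and the care in the proof lies in keeping its role separate from that of (ii), which only guarantees closability of the embedding of $\cH_0$ into $\cH^\prime$ and hence (th2).
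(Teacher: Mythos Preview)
Your sufficiency argument is correct and runs parallel to the paper's: both invoke Remark~\ref{r:ce} together with conditions (i) and (ii) to manufacture the closed embeddings $j_+$ and $j_-$, and both use (iii) to secure the inclusion $\dom(j_+^*)\subseteq\dom(j_-)$. The only genuine difference is in how the duality formula~\eqref{e:dual} is obtained. The paper renorms $\cH^\prime$ so that $B$ becomes \emph{unitary}, extends~\eqref{e:beta} from $\cD\times\cD$ to $\dom(j_-)\times\dom(j_+)$ by two successive limit passages, and then reads off the supremum in~\eqref{e:dual} directly from $\langle\phi,u\rangle_{\cH_0}=\langle B\phi,u\rangle_\cH$ and $\|B\phi\|_\cH=\|\phi\|_{\cH^\prime}$. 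You instead identify $\cH$ with $\cR(T)$ and the renormed $\cH^\prime$ with $\cD(T^*)$, and then import~\eqref{e:dual} wholesale from the model triplet already available in Theorem~\ref{t:gth}. Your route is slightly longer but makes the relation to Theorem~\ref{t:gth} explicit; the paper's direct computation is shorter and avoids having to check that $\dom(j_-)=\dom(T^*)$ and $j_-=i_{T^*}^{-1}$ on the nose.

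Two further remarks. First, the paper's proof as written treats only the sufficiency direction; your one-line necessity sketch goes beyond it. Second, that sketch tacitly assumes $\cD\subseteq\dom(j_+)$ and $\cD\subseteq\dom(j_-)$ for the \emph{given} closed embeddings, so that closedness of $j_\pm$ can be applied to sequences in $\cD$; this is the natural reading of the statement (the embeddings extend the identity on $\cD$), but it should be stated explicitly before ``reversing'' Remark~\ref{r:ce}.
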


\begin{proof} We first assume that the generalised triplet of Hilbert spaces 
$(\cH;\cH_0;\cH^\prime)$ satisfies  all conditions (i)--(iii). Consider 
the operator $j_{+,0}$ with domain $\dom(j_{+,0})=\cD$, 
viewed as a subspace of $\cH$, as the embedding in $\cH_0$, that is, 
$j_{+,0}u=u$ for all $u\in\cD$. By \eqref{e:jestar}, 
$\cD\subseteq\dom(j_{+,0}^*)$ and 
hence $j_{+,0}$ is closable. By condition (i), see Remark~\ref{r:ce}, it 
follows that the closure $j_+$ of $j_{+,0}$ is an embedding, that is, for all 
$u\in\dom(j_+)$ we have $j_+u=u$. With notation as in 
Theorem~\ref{t:gth}, this means that $T=j_+$ is a closed embedding of 
$\cH$ in $\cH_0$. In addition, by condition (i) it also follows that
\begin{equation}\label{e:jebe} \langle \phi,u\rangle_{\cH_0}=\langle B\phi,u
\rangle_\cH,\quad u\in\dom(j_+),\ \phi\in\cD.
\end{equation}

Further on, by changing the norm $\|\cdot\|_{\cH^\prime}$ with an equivalent 
norm, without loss of generality we can assume that the operator 
$B\colon\cH^\prime\ra\cH$ is unitary. We consider the operator $i_{-,0}$ 
with domain $\cD$ considered as a subspace of $\cH^\prime$ and range in 
$\cH_0$, defined by $i_{-,0}\phi=\phi$ for all $\phi\in\cD$. We observe 
that, for any $u,\phi\in\cD$ we have
\begin{equation}\label{e:iestar} \langle i_{-,0}\phi,u\rangle_{\cH_0}=
\langle \phi,u\rangle_{\cH_0}=\langle B\phi,u\rangle_{\cH}=\langle \phi,B^* u
\rangle_{\cH^\prime},
\end{equation} hence $\cD\subseteq\dom(i_{-,0}^*)$ and $B^*|\cD=i_{-,0}^*$. 
Therefore, $i_{-,0}^*$ is defined on a subspace dense in $\cH_0$ and hence
$i_{-,0}$ is closable. From condition (ii) it follows that $i_-$, the closure of the 
operator $i_{-,0}$, is an embedding, that is, $\dom(i_-)\subseteq \cH_0\cap
\cH^\prime$ and $i_-\phi=\phi$ for all $\phi\in\dom(i_-)$. 
Clearly, $\dom(i_-)=\ran(i_-)$ is dense in both $\cH_0$ and $\cH^\prime$, 
hence we can consider $j_-=i_-^{-1}$, which is a closed embedding of 
$\cH_0$ in $\cH^\prime$ with dense range. In addition, by condition (ii) 
and \eqref{e:jebe} it follows that
\begin{equation}\label{e:jebed} \langle \phi,u\rangle_{\cH_0}=\langle B\phi,u
\rangle_\cH,\quad u\in\dom(j_+),\ \phi\in\dom(j_-).
\end{equation}

So far, we have shown that the triplet $(\cH;\cH_0;\cH^\prime)$ 
satisfies the axioms (th1) and (th2), with respect to the closed embeddings 
$j_+$ and $j_-$ defined as before. Recalling that $B$ is unitary, from 
\eqref{e:jebed} it follows that, for every $\phi\in \dom(j_-)$, we have
\begin{equation*} \sup\bigl\{\frac{|\langle \phi,u\rangle_{\cH_0}|}{\|u\|_+}\mid 
u\in\dom(j_+),\ u\neq 0\bigr\}=\|B\phi\|_\cH=\|\phi\|_{\cH^\prime},
\end{equation*}
hence \eqref{e:dual} holds. It only remains to prove that 
$\dom(j_+^*)\subseteq\dom(j_-)$. To this end, let $\phi\in\dom(j_+^*)$, hence,
the linear functional $\cD\ni u\mapsto \langle j_+u,\phi\rangle_{\cH_0}
=\langle u,\phi\rangle_{\cH_0}$ is continuous with respect to the norm 
$\|\cdot\|_{\cH}$. By condition (iii), there exist a sequence
$(\phi_n)_n$ of vectors in $\cD$ such that $\|\phi_n-\phi\|_{\cH_0}\ra 0$ and 
$\|\phi_n-\phi\|_{\cH^\prime}$, as $n\ra\infty$, which means that 
$\phi\in\dom(j_-)$.
%Se poate renun\c ta la condi\c tia (iii)?
\end{proof}
%%%%%%% Bibliography %%%%%%%%

\end{document}